\documentclass{article}

\usepackage[T1]{fontenc}     
\usepackage[utf8]{inputenc}
\usepackage[a4paper]{geometry}
\usepackage{amsmath}
\usepackage{mathtools}
\usepackage{graphicx}
\usepackage[english]{babel}
\usepackage{hyperref}
\usepackage{csquotes}

\usepackage{amssymb,amsfonts,amsthm}
\usepackage{dsfont}
\usepackage{xcolor}
\usepackage{todonotes}
\usepackage{appendix}
\usepackage{chemist}
\usepackage{caption}
\usepackage{subcaption}
\usepackage{tikz}
\usetikzlibrary{math}
\usetikzlibrary{decorations.pathreplacing}
\usepackage{placeins}
\usepackage{stmaryrd}

\numberwithin{equation}{section}

\usepackage[backend=biber, style=numeric, url=false]{biblatex}
\addbibresource{article.bib}

\newtheorem{lemma}{Lemma}
\newtheorem{proposition}{Proposition}
\newtheorem{theorem}{Theorem}
\newtheorem*{theorem*}{Theorem}
\newtheorem{remark}{Remark}
\newtheorem{definition}{Definition}

\newcommand{\N}{\mathbb N}

\newcommand{\R}{\mathbb R}

\newcommand{\cA}{\ensuremath{\mathcal{A}}}

\newcommand{\cH}{\ensuremath{\mathcal{H}}}
\newcommand{\cI}{\ensuremath{\mathcal{I}}}

\newcommand{\cM}{\ensuremath{\mathcal{M}}}

\newcommand{\cV}{\ensuremath{\mathcal{V}}}


\newcommand{\bR}{\ensuremath{\mathbb{R}}}


\newcommand{\bbu}{\boldsymbol{u}}
\newcommand{\bbc}{\boldsymbol{c}}
\newcommand{\bbd}{\boldsymbol{d}}
\newcommand{\bbv}{\boldsymbol{v}}
\newcommand{\bbm}{\boldsymbol{m}}

\newcommand{\bbz}{\boldsymbol{z}}

\newcommand{\bbF}{\boldsymbol{F}}

\newcommand{\bbH}{\boldsymbol{H}}
\newcommand{\bbJ}{\boldsymbol{J}}
\newcommand{\bbA}{\boldsymbol{A}}
\newcommand{\bbM}{\boldsymbol{M}}

\newcommand{\bbI}{\boldsymbol{I}}

\newcommand{\bbmu}{\boldsymbol{\mu}}

\newcommand{\bbbeta}{\boldsymbol{\beta}}

\newcommand{\ovA}{\bar{A}}
\newcommand{\ovkappa}{\bar{\kappa}}

\newcommand{\ovc}{\bar{c}}
\newcommand{\ovX}{\overline{X}}

\renewcommand{\l}{\left}
\renewcommand{\r}{\right}

\newcommand{\blue}{\textcolor{blue}}


\title{Cross-diffusion systems coupled via a moving interface}
\author{Clément Cancès, Jean Cauvin-Vila, Claire Chainais-Hillairet, Virginie Ehrlacher}
\date{}

\begin{document}

\maketitle

\begin{abstract}
    We propose and study a one-dimensional model which consists of two cross-diffusion systems coupled via a moving interface. The motivation stems from the modelling of complex diffusion processes in the context of the vapor deposition of thin films. In our model, cross-diffusion of the various chemical species can be respectively modelled by a size-exclusion system for the solid phase and the Stefan-Maxwell system for the gaseous phase. The coupling between the two phases is modelled by linear phase transition laws of Butler-Volmer type, resulting in an interface evolution. The continuous properties of the model are investigated, in particular its entropy variational structure and stationary states. We introduce a two-point flux approximation finite volume scheme. The moving interface is addressed with a moving-mesh approach, where the mesh is locally deformed around the interface. The resulting discrete nonlinear system is shown to admit a solution that preserves the main properties of the continuous system, namely: mass conservation, nonnegativity, volume-filling constraints, decay of the free energy and asymptotics. In particular, the moving-mesh approach is compatible with the entropy structure of the continuous model. Numerical results illustrate these properties and the dynamics of the model. 
\end{abstract}

\section{Introduction}
\label{sec:intro}

We propose and study an extension of the mathematical model introduced in \cite{bakhta2018} to describe a physical vapor deposition process used in particular for the fabrication of semiconducting thin film layers in the photovoltaic industry. The process can be described as follows: a wafer is introduced in a hot chamber where chemical elements are injected under gaseous form. As the latter deposit on the substrate, a heterogeneous solid layer grows upon it. Because of the high temperature conditions, diffusion occurs in the bulk until the wafer is taken out and the system is frozen. There are two essential features in the problem: the evolution of the surface of the film and the diffusion of the various species due to high temperature conditions. In the series of works \cite{bakhta2018,bakhta2021,cauvin-vila2023}, the authors introduced and studied a one-dimensional moving-boundary cross-diffusion model where only the evolution of the solid layer was considered. The latter is composed of $n$ different chemical species and occupies a domain of the form $(0,X(t))$, where $X(t) >0$ denotes the thickness of the film at time $t>0$. For any $i \in \{1, \dots, n\}$, the flux of atoms of species $i$ absorbed at the surface of the solid film layer at time $t$ is denoted by $F_i(t)$. For all $t>0$ and $x\in (0, X(t))$, denoting by $c_{i}(t,x)$ the local volume fraction of species $i$ at position $x\in (0,X(t))$ and time $t$ and  setting $\bbF(t) = (F_i(t))_{1\leq i \leq n}$ and  $\bbc(t,x):= (c_{i}(t,x))_{1\leq i \leq n}$, the resulting moving-boundary cross-diffusion system reads as
\begin{equation}
    \label{eq:cross-diff-BE}
    \partial_t \bbc(t,x) - \partial_x (\bbA_s(\bbc)\partial_x \bbc)(t,x)  = 0, ~ \mbox{for } t>0 \mbox{ and } x \in (0,X(t)),
\end{equation}
for some cross-diffusion matrix mapping $\bbA_s: \mathbb{R}^n \to \mathbb{R}^{n\times n}$ describing the diffusion in the solid phase, together with the boundary conditions
\begin{align}
\label{eq:bc-BE-1}
 (\bbA_s(\bbc)\partial_x \bbc)(t,0)   & = 0,  \\\label{eq:bc-BE-2}
 (\bbA_s(\bbc)\partial_x \bbc)(t,X(t)) + X'(t)\bbc(t,X(t))  & = \bbF(t), 
\end{align}
and appropriate initial conditions. In other words, no-flux boundary conditions are assumed on the bottom ($x=0$) part of the thin film layer while \eqref{eq:bc-BE-2} expresses the fact that the flux of the $i^{th}$ species absorbed on the upper part of the layer (corresponding to $x = X(t)$) is given by $F_i(t)$. The evolution of the thickness of the layer is assumed to be driven by the following equation
\begin{equation}
    \label{eq:interface-BE}
      X'(t) =  \sum_{i=1}^n F_i(t).
\end{equation}
In~\cite{bakhta2018,bakhta2021}, the absorbed fluxes $\bbF(t)$ are assumed to be explicitly known, which is not realistic since the values of these fluxes depend on the interaction between the gaseous and the solid phase in the hot chamber. This work is a first attempt to build a more evolved model taking into account the evolution of the gaseous phase and its interaction with the solid phase. For the sake of simplicity, we only consider here an isolated system (no incoming fluxes in the hot chamber) in order to mainly focus on the moving-interface coupling. The present paper is then devoted to some theoretical and numerical analysis of the proposed system.

\medskip

Let us present some related contributions from the literature before highlighting the novelty of the present work. Cross-diffusion systems have gained significant interest from the mathematical community in the last twenty years. Indeed, it has been understood in the seminal works~\cite{chen2006,burger2010,jungel2012,jungel2013,jungel2015} that many of these systems have a variational entropy structure, which enables to obtain appropriate estimates in order to prove the existence of weak solutions and to study convergence to equilibrium. In particular, many contributions study theoretical and numerical aspects of the Stefan-Maxwell system \cite{bothe2011,boudin2012,jungel2013,boudin2015maxwell,herberg2017,cancesFiniteVolumesStefan2024} and of the size-exclusion system \cite{burger2010,jungel2015,cancesConvergentEntropyDiminishing2020, hopf2022} that we both consider in this work as typical applications. The variational entropy structure was extended to reaction-diffusion systems of mass-action type in \cite{mielke2011}, and to bulk-interface systems in \cite{glitzky2013}, see also the related works \cite{mielke2015, maas2020, disser2020,morgan2022}. However, all these contributions are restricted to a fixed domain. \newline 
Similar problems posed in moving-boundary domains were investigated in previous works by the authors: in \cite{bakhta2018}, the existence of global weak solutions to the system \eqref{eq:cross-diff-BE}-\eqref{eq:bc-BE-1}-\eqref{eq:bc-BE-2} was proved and the long-time asymptotics were studied in the case of constant fluxes $F$. The rapid stabilization of the associated linearized system was studied in \cite{cauvin-vila2023}. A similar moving-boundary parabolic system was introduced and studied in \cite{aiki2009,aiki2010,aiki2013} to model concrete carbonation. In \cite{chainais-hillairet2018}, the authors introduced a finite volume scheme approximating the system, using a rescaling to a fixed domain, and proved its convergence towards a continuous weak solution. Additionally, the long-time regime of the approximated moving-interface was studied in \cite{zurek2019a} (see also the thesis \cite{zurek2019}). In \cite{portegies2010a}, the authors studied a scalar parabolic problem in a one-dimensional moving-boundary domain, using Wasserstein gradient flows methods. This approach was adapted to a more complex model in \cite{merlet2022}. The authors of \cite{bothe2017} studied Stefan-Maxwell reaction-diffusion in moving-boundary domains of arbitrary dimension. Their model, though more complex, is very much related to ours, but they do not seem to include energy dissipation through the interface and are more interested in dynamical aspects than numerical considerations. We also refer to the monograph \cite{pruss2016} for mathematical tools related to quasilinear parabolic problems in moving-boundary domains. 

\medskip

Our work makes the following contributions:
\begin{itemize}
    \item In Section~\ref{sec:model}, we introduce a new moving-interface cross-diffusion model and highlight its variational entropy structure. The latter implies the thermodynamic consistency of the model and lays the foundations for a rigorous mathematical analysis. The stationary states are identified (Proposition~\ref{prop:stationary}) and insights are given concerning the long-time behaviour of the model (Proposition~\ref{prop:stab}).
    \item In Section~\ref{sec:fv}, a structure-preserving finite-volume scheme is introduced to discretize the system. In contrast to the scheme designed in \cite{chainais-hillairet2018}, we do not rescale the system to a fixed domain but rather discretize the moving-interface following a cut-cell approach. 
    \item We present some results of numerical analysis of the scheme in Section~\ref{sec:num-analysis}. We prove the existence of at least one discrete solution to the scheme at each time step and that this solution preserves the entropic structure of the continuous system (Theorem~\ref{thm:existence}). In particular, the procedure proposed here to update the interface and the mesh at each time step preserves the decay of the entropy at the discrete level (Lemma~\ref{lem:dissip}). In addition, the numerical scheme enables to preserve on the discrete level some expected fundamental properties of the solutions of the model, namely non-negativity of the solutions and total mass (Lemma~\ref{lem:structure}).
    \item Numerical results are given in Section~\ref{sec:numerics}, illustrating the properties of the model and the nice properties of the scheme. Relying on these numerical observations, we formulate some conjectures concerning the long-time behaviour of the solutions of the model. 
\end{itemize}

\section{Moving-interface coupled model}
\label{sec:model}

This section is devoted to the presentation and analysis of the continuous model we consider in this work. The model is first broadly presented in Section~\ref{sec:pres} while technical assumptions on the cross-diffusion matrices together with relevant examples are given in Section~\ref{sec:ass}. The entropy structure of the system is formally investigated in Section~\ref{sec:entropy}. Section~\ref{sec:stationary} is devoted to the characterization of the stationary states and to a discussion about the dynamics. Finally, Section~\ref{sec:stab} presents a stability analysis of a simplified ODE model.

\subsection{Presentation of the model}
\label{sec:pres}

Let $(0,1)$ be the physical domain containing both the solid and gaseous phases, and let $Q:= \mathbb{R}_+ \times (0,1)$ be the time-space domain of the problem. For all $t\geq 0$, let $X(t)\in [0,1]$ denote the position at time $t$ of the interface between the two phases. More precisely, at time $t$, the solid phase occupies the domain $(0, X(t))$ and the gaseous phase occupies the domain $(X(t),1)$. We adopt the convention that, if $X(t) = 0$ (respectively $X(t) = 1$), then the domain is entirely composed of a gaseous (respectively solid) phase. We consider $n$ different chemical species represented by their densities of molar concentration. More precisely, for all $i\in \{1, \dots, n\}$, $c_i(t,x) \geq 0$ represents the density of molar concentration of species $i$ at time $t\geq 0$ and position $x\in (0,1)$ and we set $\bbc(t,x) := (c_i(t,x))_{i \in \{1,\dots,n\}}$. We expect that so-called \emph{volume-filling} constraints are satisfied, then for almost all $(t,x)\in Q$, the vector $\bbc(t,x)$ is expected to belong to the set
\begin{equation}
\label{eq:constraints}
\cA := \left\{ (c_1,\dots,c_n) \in \bR_+^{n}, ~ \sum_{i=1}^n c_i = 1 \right\}.
\end{equation}
From a modelling perspective, the volume-filling constraints arise from size exclusion effects in the solid phase and from isobaric assumptions in the gas mixture (see Section~\ref{sec:ass} below). We assume that initial conditions for the model are given such that, at time $t=0$, 
\begin{subequations}
    \label{eq:model}
\begin{equation}\label{eq:init}
X(0) = X^0 \mbox{ and }\bbc(0,x) = \bbc^0(x), \text{ for a.a } x \in (0,1),
\end{equation}
for some $X^0\in (0,1)$ and $\bbc^0(x) \in \cA$ for almost all $x \in (0,1)$. Now, for almost all $(t,x)\in Q$ and all $i\in \{1, \ldots,n\}$, we denote by $J_i(t,x)\in \mathbb{R}$ the molar flux of species $i$ at time $t$ and position $x\in (0,1)$, and set $\bbJ(t,x):= (J_1(t,x), \dots, J_n(t,x))^T$. The local conservation of matter inside the solid and gaseous phase respectively reads as
\begin{equation}
\label{eq:conservation}
\partial_t \bbc + \partial_x \bbJ = 0, \text{ a.e. in } Q.
\end{equation}
Let us also denote by
$$
Q_s:= \left\{ (t,x)\in Q, \; x \in (0, X(t))\right\} \quad \mbox{ and } \quad Q_g:= \left\{ (t,x)\in Q, \; x \in (X(t), 1)\right\}, 
$$
the time-space domain associated to the solid and gaseous phases respectively. Cross-diffusion phenomena are modelled by a diffusion matrix-valued application $\bbA_s: \mathbb{R}^n \to \mathbb{R}^{n\times n}$ (resp. $\bbA_g: \mathbb{R}^n \to \mathbb{R}^{n\times n}$) in the solid (resp. gaseous) phase, as 
\begin{equation}
\begin{aligned}
      \bbJ &= - \bbA_s(\bbc) \partial_x \bbc, ~ \text{a.e. in } ~ Q_s, \\
      \bbJ &= - \bbA_g(\bbc) \partial_x \bbc, ~ \text{a.e. in} ~ Q_g.
\end{aligned}
  \label{eq:cross-diff}
\end{equation}
We require that the diffusion matrix applications $\bbA_s$ and $\bbA_g$ satisfy some assumptions which will be made precise below in Section~\ref{sec:ass}. On the boundary of the full domain $(0,1)$, zero-flux boundary conditions are imposed, i.e.
\begin{equation}
\bbJ(t,0) = \bbJ(t,1) = 0, \mbox{ for a.a. } t\geq 0. 
\end{equation}
To complete the definition of the model, it remains to introduce (i) the evolution of the position of the interface $X(t)$ and (ii) the flux transmission conditions across this interface. To this aim, we use a flux vector $\bbF(t) = (F_i(t))_{i\in \{1,\ldots,n\}}$ which accounts for phase transition mechanisms located at the vicinity of the interface between the solid and gaseous phases. We focus in this work on interface fluxes of Butler-Volmer type. More precisely, we introduce some rescaled reference chemical potentials $\bbmu^{*,s}:=(\mu_i^{*,s})_{i\in \{1,\ldots,n\}}, \bbmu^{*,g}:=(\mu_i^{*,g})_{i\in \{1,\ldots,n\}} \in \R^n$ and define the constants 
\begin{equation}
    \label{eq:beta}
    \beta_i^* := \exp \l(\llbracket \mu_i^* \rrbracket \r), \quad \llbracket \mu_i^* \rrbracket:= \mu_i^{*,g} - \mu_i^{*,s}, \quad ~ \forall i \in \{1,\dots,n\}.
\end{equation}
Then, the vector $\bbF(t)$ is defined for all $t\geq 0$, for all $i \in \{1,\dots,n\}$ by
\begin{equation}
  \label{eq:BV}
  F_i(t) = \begin{cases}
         \sqrt{\beta_i^*}c_i^g(t) -  \frac{1}{\sqrt{\beta_i^*}}c_i^s(t), &\text{ if } X(t) \in (0,1), \\
        0, &\text{ otherwise}.
        \end{cases}  
  \end{equation}
Note that, when $X(t) \in (0,1)$, this is a trivial instance of the law of mass action. Then, the evolution of the location of the interface is defined as, for almost all $t\geq 0$, 
\begin{equation}\label{eq:evolinter}
X'(t) = \sum_{i=1}^n F_i(t).
\end{equation}
Notice that, if there exists $t_0\geq 0$ such that $X(t_0) = 0$ (respectively $X(t_0) = 1$), then $X(t) = 0$ (respectively $X(t) = 1$) for all $t\geq t_0$, and the system boils down to a single cross-diffusion system defined in the whole domain $(0,1)$ with no-flux boundary conditions and diffusion matrix given by $\bbA_g$ (respectively $\bbA_s$). We define
$$
T:= \mathop{\inf}\left\{t\in \mathbb{R}_+, \; X(t)= 0 \; \text{or}\; X(t) =1\right\}
$$
so that $X(t)\in (0,1)$ if and only if $t\in[0, T)$. For all $0 \leq t < T$ and all $i\in\{1, \ldots, n\}$, we define the quantities
\begin{align*}
    c_i^s(t) &:= c_i(t, X(t)^-), \; c_i^g(t) := c_i(t, X(t)^+), \\
    J_i^s(t) &:= J_i(t, X(t)^-), \; J_i^g(t) := J_i(t, X(t)^+),
\end{align*}
and set 
\[\begin{aligned}
    \bbc^s(t) &:= (c_i^s(t))_{i\in \{1,\ldots, n\}}, ~ \bbc^g(t) := (c_i^g(t))_{i\in \{1,\ldots, n\}}, \\
    \bbJ^s(t) &:= (J_i^s(t))_{i\in \{1,\ldots, n\}}, ~ \bbJ^g(t) := (J_i^g(t))_{i\in \{1,\ldots, n\}}. 
\end{aligned} \] 
Then we impose the following transmission conditions across the moving interface 
\begin{equation}
  \label{eq:interface-bc}
\begin{aligned}
    \bbJ^s &= -\bbA_s(\bbc^s) \l( \partial_x \bbc \r)^s, \\
    \bbJ^g &= -\bbA_g(\bbc^g) \l( \partial_x \bbc \r)^g, \\
    -\bbJ^s(t)+ X'(t) \bbc^s(t)  &= -\bbJ^g(t) + X'(t) \bbc^g(t) = \bbF(t). 
\end{aligned}
\end{equation}
\end{subequations}
Note that this implies in particular that for almost all $0 \leq t < T$,
\begin{equation}
    \label{eq:interface-conservation}
    \llbracket\bbJ(t) \rrbracket - X'(t) \llbracket\bbc(t) \rrbracket = 0,
\end{equation}
where 
$$
\llbracket\bbJ(t) \rrbracket = \bbJ^g(t) - \bbJ^s(t) \quad  \mbox{ and } \quad \llbracket\bbc(t) \rrbracket = \bbc^g(t) - \bbc^s(t). 
$$
Let us point out that conservation of matter follows from the local conservation equation \eqref{eq:conservation}, the zero-flux conditions on the fixed boundary and the conservative condition \eqref{eq:interface-conservation}. Indeed, taking into account the discontinuity of the fluxes and concentrations at the interface, it holds that for any $i \in \{1,\ldots,n\}$ and almost all $t\geq 0$,
\begin{equation*}
\begin{aligned}
   \frac{d}{dt} \left(\int_0^1 c_i(t) ~ dx \right) &= \frac{d}{dt} \left( \int_0^{X(t)} c_i(t) ~ dx + \int_{X(t)}^1 c_i(t) ~ dx \right)  \\
   &= \int_0^{X(t)} \partial_t c_i(t) ~ dx + \int_{X(t)}^1 \partial_t c_i(t) ~ dx - X'(t) \llbracket c_i(t)\rrbracket \\
   &= -\int_0^{X(t)} \partial_x J_i(t) ~ dx - \int_{X(t)}^1 \partial_x J_i(t) ~ dx - X'(t) \llbracket c_i(t)\rrbracket \\
   &= \llbracket J_i(t) \rrbracket -X'(t) \llbracket c_i(t)\rrbracket \\
   &= 0,
\end{aligned}
\end{equation*}
where $ \llbracket c_i(t)\rrbracket  = c_i^g(t) - c_i^s(t)$ and $\llbracket J_i(t)\rrbracket = J_i^g(t) - J_i^s(t)$. 

\subsection{Assumptions on cross-diffusion matrices}
\label{sec:ass}

The aim of this section is to summarize the assumptions that $\bbA_s$ and $\bbA_g$ must satisfy for the coupled model presented in the previous section to enjoy the entropy structure that will be highlighted in the next section. 
\medskip
Let us define
$$
\mathcal V_0:= \left\{ \bbz = (z_1,\ldots,z_n)\in \mathbb{R}^n, \; \sum_{i=1}^n z_i = 0 \right\}. 
$$
We make the following two assumptions: for $\alpha \in \{s,g\}$,
\begin{itemize}
    \item[(A1)] for all $\bbc \in \mathcal A \cap (\mathbb{R}_+^*)^n$, $\bbA_\alpha(\bbc)\left( \mathcal V_0\right) \subset \mathcal V_0$. 
    \item[(A2)] There exists $C_\alpha>0$ and $m_1,\dots,m_n \leq 2$ such that for all $\bbc = (c_i)_{i\in \{1, \ldots, n\}}\in \mathcal A \cap (\mathbb{R}_+^*)^n$ and all $\bbz = (z_i)_{i\in \{1,\ldots,n\}} \in \mathcal V_0$, 
    $$
    \bbz^T \bbA_\alpha(\bbc) \bbH(\bbc)^{-1} \bbz \geq C_\alpha \sum_{i=1}^n c_i^{m_i}|z_i|^2,
    $$
    where $|\cdot|$ denotes the Euclidean norm of $\mathbb{R}^n$ and where 
    \[\bbH(\bbc) = \mathrm{diag}\left(\frac{1}{c_1},\dots,\frac{1}{c_n}\right).\]
 \end{itemize}
Let us make a few remarks before giving explicit examples of diffusion matrices satisfying conditions (A1)-(A2). First, let us point out here that, if $\bbA_s$ and $\bbA_g$ are chosen so that (A1) is satisfied, then in the light of \eqref{eq:conservation}-\eqref{eq:cross-diff} this implies, at least on the formal level, that $\displaystyle \sum_{i=1}^n J_i(t,x) = 0$ from which it follows that $\displaystyle \sum_{i=1}^n c_i(t,x) = 1$, for almost all $(t,x)\in Q$. Second, let us mention that condition (A2) implies that the cross-diffusion system associated to a pure (gaseous or solid) phase enjoys an entropy structure in the sense of~\cite{jungel2015}, associated to the logarithmic free energy functional with free energy density defined by
$$
\forall \bbc:=(c_i)_{i\in \{1,\ldots,n\}}\in \mathcal A, \quad h(\bbc) :=  \sum_{i=1}^n c_i \log c_i.$$
Let us point out in particular that $\bbH(\bbc)$ is the Hessian of $h$ at vector $\bbc\in \mathcal A \cap (\mathbb{R}_+^*)^n$. In the following for all $\bbc \in \mathcal A \cap (\mathbb{R}_+^*)^n$, we denote by 
\begin{equation}
\label{eq:mobility}
    \begin{aligned}
        \bbM_\alpha(\bbc) &= \bbA_\alpha(\bbc) \bbH^{-1}(\bbc).
    \end{aligned}
\end{equation}
the so-called \itshape mobility matrix \normalfont of the phase $\alpha$. 

\medskip

We give in the following two typical examples of diffusion matrix applications which satisfy conditions (A1) and (A2). We will use them throughout the rest of the article. 

\medskip

\bfseries Example 1 (solid phase): \normalfont We consider here the diffusion matrix application introduced in~\cite{bakhta2018}. More precisely, for all $i,j\in \{1,\ldots,n\}$, we introduce some cross-diffusion coefficients $\kappa_{ij}^s=\kappa_{ji}^s > 0$ (with $\kappa_{ii}^s=0$ by convention). For all $\bbc = (c_i)_{i\in \{1, \ldots, n\}} \in \mathbb{R}^n$, the diffusion matrix $\bbA_s(\bbc) \in  \mathbb{R}^{n\times n}$ is defined by
\begin{equation}
    \label{eq:size-exclusion-matrix}
    \begin{aligned}
        (\bbA_s)_{ii}(\bbc) &= \sum_{j \neq i} \kappa_{ij}^s c_j, ~ i \in \{1,\dots,n\}, \\
        (\bbA_s)_{ij}(\bbc) &= - \kappa^s_{ij} c_i, ~ i \neq j \in \{1,\dots,n\}.
    \end{aligned}
\end{equation}
First, it can be easily checked that for all $\bbc\in \mathcal A$, $\bbA_s(\bbc)$ satisfies condition (A1). 
Moreover, it can be checked that $\bbA_s$ satisfies condition (A2) with $m_i = 1$ for any $i \in \{1,\dots,n\}$ and we refer the reader to~\cite[Lemma 1]{bakhta2018} for a proof. 

\medskip

\bfseries Example~2 (gaseous phase): \normalfont For all $i,j\in \{1,\ldots,n\}$, we introduce some (inverse) cross-diffusion coefficients $\kappa_{ij}^g=\kappa_{ji}^g > 0$ (with $\kappa_{ii}^g=0$ by convention). In the gaseous phase, the fluxes are implicitly defined via the Stefan-Maxwell linear system \cite{bothe2011}
\begin{equation}
    \widetilde{\bbA}_g(\bbc) \bbJ = - \partial_x \bbc ~ \text{and} ~ \bbJ  \in \mathcal V_0, ~ \text{a.e. in }Q_g,
    \label{eq:Maxwell-Stefan-flux}
\end{equation}
where for all $\bbc = (c_i)_{i\in \{1, \ldots, n\}} \in \R^n$, the diffusion matrix $\check{\widetilde{\bbA}}_g(\bbc) \in  \mathbb{R}^{n\times n}$ is defined by
\begin{equation}
    \label{eq:size-exclusion-matrix-gas}
    \begin{aligned}
        \left(\widetilde{\bbA}_g\right)_{ii}(\bbc) &= \sum_{j \neq i} \kappa_{ij}^g c_j, ~ i \in \{1,\dots,n\}, \\
        \left(\widetilde{\bbA}_g\right)_{ij}(\bbc) &= - \kappa^g_{ij} c_i, ~ i \neq j \in \{1,\dots,n\}.
    \end{aligned}
\end{equation}
Notice that the expression of $\widetilde{\bbA}_g(\bbc)$ is similar to the one of $\bbA_s(\bbc)$.  The matrix $\widetilde{\bbA}_g(\bbc)$ is not invertible in general, but it holds that for all $\bbc \in \mathcal A \cap \left( \mathbb{R}_+^*\right)^n$, $\widetilde{\bbA}_g(\bbc)\left( \mathcal V_0\right) \subset \mathcal V_0$ and the restriction $\widetilde{\bbA}_g(\bbc)|_{\mathcal V_0}$ defines an invertible linear mapping from $\mathcal V_0$ onto $\mathcal V_0$ (see \cite[Section 5]{bothe2011}). As a consequence, for all $\bbc\in \mathcal A \cap (\mathbb{R}_+^*)^n$, there exists a unique matrix $\bbA_g(\bbc)\in \mathbb{R}^{n\times n}$ so that
$$
\bbA_g(\bbc) z = \left\{
\begin{array}{ll}
\widetilde{\bbA}_g(\bbc)|_{\mathcal V_0}^{-1} \bbz & \mbox{ if }\bbz\in \mathcal V_0,\\
0 & \mbox{ if } \bbz \in \left(\mathcal V_0\right)^{\perp}.\\
\end{array}
\right.
$$
The relationship \eqref{eq:Maxwell-Stefan-flux} can then be rewritten as
\begin{equation}
    \bbJ = - \bbA_g(\bbc)\partial_x \bbc, ~ \text{a.e. in } ~ Q_g,
    \label{eq:Maxwell-Stefan-flux-inverted}
\end{equation}
It can then easily be checked that $\bbA_g$ satisfies condition (A1). The proof that it satisfies condition (A2) with $m_i=1$ for any $i \in \{1,\dots,n\}$ can be found in \cite[Lemma 2.4]{jungel2013}.

\begin{remark}[Physical variables]
\label{rk:variables}
In \cite{bakhta2018}, the system in the solid phase is written in terms of volume fraction variables, and the volume-filling constraint originates from size exclusion effects. Since we work here with molar concentrations, we should rather write $\sum_{j=1}^n v_j c_j = 1$ in $(0,X)$, where the $v_j$ are constant molar volumes, but we normalize these constants to one to simplify. In \cite{bothe2011}, the volume-filling constraint in the Stefan-Maxwell model follows from isobaric conditions in the mixture. Let us point out that, although void can be modelled in the solid layer as one particular species accounting for vacancies at the microscopic level and represented by its volume fraction in the continuous limit, the Stefan-Maxwell model, because it is written in terms of molar concentrations of an incompressible mixture, does not address void (or free volume), see for example \cite{otto1997}. 
\end{remark}

\subsection{Entropy structure}
\label{sec:entropy}

The aim of this section is to highlight the entropy structure of the coupled model introduced in Sections~\ref{sec:pres} and \ref{sec:ass}. For any $X \in [0,1]$ and $\bbc : (0,1) \to \R^n$ such that $\bbc(x) \in \mathcal A$ for almost all $x \in (0,1)$, the coupled free energy functional is defined by
\begin{equation}
  \label{eq:free-energy}
  \cH[\bbc,X] = \int_0^X h_s(\bbc) ~ dx + \int_X^1 h_g(\bbc)~ dx,
\end{equation}
with free energy densities given by, for $\alpha \in \{s,g\}$,
\begin{equation}
\label{eq:energy-density}
\forall \bbc = (c_i)_{i\in \{1, \ldots, n\}}\in \mathcal A, \quad   h_{\alpha}(\bbc) :=   \sum_{i=1}^n c_i (\log(c_i)+\mu_i^{*,\alpha}) -c_i +1.
\end{equation}
For all $\bbc := (c_i)_{i\in \{1,\ldots,n\}} \in \mathcal A$, the chemical potentials are defined for $\alpha \in \{s,g\}$ as $\bbmu_\alpha = (\mu_{\alpha,i})_{i \in \{1,\dots,n\}}$ and
\begin{equation}
\label{eq:chemical-pot}
\mu_{\alpha,i}(\bbc) = \partial_{c_i} h_\alpha(\bbc) =  \log(c_i) + \mu_i^{*,\alpha}, \quad \forall i\in \{1, \ldots, n\}. 
\end{equation}
Note that $\partial_x \bbmu_\alpha(\bbc) = \partial_x \log(\bbc)$ does not depend on the phase $\alpha \in \{s,g\}$ and let us define 
$$
\llbracket\bbmu(t)\rrbracket := \left\{
\begin{array}{ll}
\bbmu_g(\bbc^g(t)) - \bbmu_s(\bbc^s(t)), & \mbox{if }t\in [0,T), \\
0, & \mbox{otherwise}.
\end{array}
\right.
$$
Let us now formally time-differentiate the free energy along solutions to \eqref{eq:model}:
\begin{align*}
    \frac{d}{dt} \cH[\bbc,X] &= \int_0^{X} \partial_t \bbc^T \bbmu_s ~ dx + \int_{X}^1 \partial_t \bbc^T \bbmu_g ~ dx - X' \llbracket h_\alpha(\bbc) \rrbracket \\ 
    &= - \int_0^{X} \partial_x \log(\bbc)^T \bbA_s(\bbc) \partial_x \bbc ~ dx - \int_{X}^1 \partial_x \log(\bbc)^T \bbA_g(\bbc) \partial_x \bbc ~ dx \\
    &+ \llbracket \bbJ^T \bbmu \rrbracket - X' \llbracket h_\alpha(\bbc) \rrbracket \\ 
    &= - \int_0^{X} \partial_x \log(\bbc)^T \bbM_s(\bbc) \partial_x \log(\bbc) ~ dx - \int_{X}^1 \partial_x \log(\bbc)^T \bbM_g(\bbc) \partial_x \log(\bbc) ~ dx \\
    &+ \llbracket \l(\bbJ-X' \bbc\r)^T \bbmu \rrbracket - X' \llbracket h_\alpha(\bbc) - \bbc^T \bbmu \rrbracket 
\end{align*}
where we used integration by parts and assumption \eqref{eq:mobility}. Then using the transmission conditions \eqref{eq:interface-bc} and the fact that  $\llbracket h_\alpha(\bbc) - \bbc^T \bbmu \rrbracket=0$ for $\bbc \in \cA$, we obtain the free energy dissipation equality: for almost all $t\geq 0$,
\begin{equation}
  \label{eq:dissip}
  \begin{aligned}
\frac{d}{dt} \cH[\bbc(t),X(t)] &+ \int_0^{X(t)} \partial_x \log(\bbc(t))^T \bbM_s(\bbc(t))\partial_x \log(\bbc(t)) ~ dx \\
&+ \int_{X(t)}^1 \partial_x \log(\bbc(t))^T \bbM_g(\bbc(t))  \partial_x \log(\bbc(t)) ~ dx  + \bbF(t)^T \llbracket\bbmu(t)\rrbracket = 0,
\end{aligned}
\end{equation}
First, since, for $\alpha \in \{s,g\}$, $\bbA_\alpha$ satisfies condition (A2), it holds that, almost everywhere in $Q_\alpha$,
\begin{equation}
    \label{eq:estimation-diffusion}
    \begin{aligned}
    (\partial_x \log(\bbc)^T \bbM_\alpha(\bbc) \partial_x \log(\bbc) \geq C_\alpha \sum_{i=1}^n c_i^{m_i} | \partial_x \log(c_i)|^2 \geq C_\alpha \sum_{i=1}^n \frac{1}{c_i^{2-m_i}} | \partial_x c_i|^2 \geq  C_\alpha |\partial_x \bbc|^2,
    \end{aligned}
\end{equation}
where in the last inequality we used the fact that, for any $i \in \{1,\dots,n\}$, $m_i \leq 2$ and $c_i \leq 1$. Furthermore, for almost all $t\in (0,T)$, the Butler-Volmer fluxes \eqref{eq:BV} can be reinterpreted, using \eqref{eq:chemical-pot}, as, for $i \in \{1,\dots,n\}$, 
\begin{equation}
    \label{eq:BV-var}
    F_i(t) = c_i^g(t) \exp \l(\frac{1}{2}\llbracket \mu_i^*\rrbracket \r) -c_i^s(t) \exp \l( -\frac{1}{2}\llbracket\mu_i^*\rrbracket \r)  = 2 \sqrt{c_i^s(t) c_i^g(t)} \sinh\left(\frac{1}{2} \llbracket\mu_i(t)\rrbracket\right),
\end{equation} 
which guarantees that, for almost any $t \geq 0$,
\[\bbF(t)^T \llbracket\bbmu(t)\rrbracket \geq 0.\]
As a consequence, the free energy is a \emph{Lyapunov functional} of the coupled system, in the sense that for almost all $t\geq 0$,
\[ \frac{d}{dt} \cH[\bbc(t), X(t)] \leq 0.\]
Note that, given the definition of the free energy density \eqref{eq:energy-density}, this property guarantees the preservation of the nonnegativity of the concentrations along the dynamics.

\medskip

Let us now go a step further in the analysis of the structure of the interface fluxes \eqref{eq:BV-var} with respect to the free energy \eqref{eq:free-energy}. In the series of works \cite{mielke2011, glitzky2013, maas2020}, the mass action law was associated to a quadratic gradient structure with respect to $\cH$.  Later, the authors of \cite{adams2011,adams2013a,mielke2014} tried to derive this structure from microscopic systems using large deviations theory. Interestingly, they did not recover the previously known quadratic structure, but discovered a new generalized (non-quadratic) gradient structure. We use this structure in this work. More precisely, let us introduce an auxiliary function, defined on the real line as
\[ \phi(x) = 4\left(\cosh\left(\frac{x}{2}\right)-1\right),  ~ \forall x \in \R. \]
This function is a \emph{dissipation potential}: it is smooth, strictly convex, nonnegative and such that $\phi(0)=0$. Its derivative is given by
\[ \phi'(x) = 2 \sinh\left(\frac{x}{2}\right), ~ \forall x \in \R,  \]
and $\phi'$ is bijective from $\R$ to $\R$. The convex conjugate of $\phi$ (\emph{dual dissipation potential}) is given by (see \cite[Section 5a]{adams2013a})
\[\phi^*(z) = \sup_{x \in \R} \{ xz - \phi(x) \} = 2z \log\left(\frac{z + \sqrt{z^2+4}}{2}\right) - 2\sqrt{z^2+4} + 4, ~ \forall z \in \R.\]
The Fenchel-Young duality states that $z = \phi'(x)$ if and only if
\begin{equation}
\label{eq:FenchelYoung}
\phi(x) + \phi^*(z) = z x,
\end{equation}
so that
\[ \phi^*(\phi'(x)) = x \phi'(x) - \phi(x), ~ x \in \R,\]
which implies, by strict convexity of $\phi$, the fact that $\phi(0)=0$ and that $\phi' : \R \to \R$ is bijective, that $\phi^* \geq 0$ in $\R$ and $\phi^*(z) = 0 $ if and only if $z=0$. 

Let us now remark that the Butler-Volmer fluxes \eqref{eq:BV-var} are related to $\phi$ via the following relationship   
\[ 
\phi'\left( \llbracket \mu_i\rrbracket \right) = \frac{F_i}{\sqrt{c_i^g c_i^s}}, ~ i \in \{1,\dots,n\}.
\]
As a consequence, applying \eqref{eq:FenchelYoung} to $x:= \llbracket \mu_i\rrbracket$, it holds that for any $i \in \{1,\dots,n\}$, 
\begin{equation*}
\sqrt{c_i^s c_i^g} \left( \phi( \llbracket \mu_i\rrbracket) + \phi^*\left(\frac{F_i}{\sqrt{c_i^s c_i^g}} \right) \right) = F_i  \llbracket \mu_i\rrbracket.  
\end{equation*}
Therefore, one can rewrite \eqref{eq:dissip} as 
\begin{equation}
\label{eq:dissipation-strong}
\begin{aligned}
\frac{d}{dt} \cH[\bbc(t),X(t)] &+ \int_0^{X(t)} (\partial_x \log(\bbc(t)))^T \bbM_s(\bbc(t)) \partial_x \log(\bbc(t))\, dx  \\
&+ \int_{X(t)}^1 (\partial_x \log(\bbc(t)))^T \bbM_g(\bbc(t)) \partial_x \log(\bbc(t))\, dx  \\
&+  \sum_{i=1}^n \sqrt{c_i^s(t) c_i^g(t)} \left( \phi( \llbracket \mu_i(t)\rrbracket) + \phi^*\left(\frac{F_i(t)}{\sqrt{c_i^s(t) c_i^g(t)}}\right) \right)  = 0.
\end{aligned}
\end{equation}

This identity is not yet satisfying, since it may degenerate when $c_i^s, c_i^g=0$. To circumvent this issue, we use on the one hand the estimates \eqref{eq:estimation-diffusion} on the diffusion terms, and on the other hand the fact that it holds
\[ \sqrt{c_i^s c_i^g} \left( \phi( \llbracket \mu_i\rrbracket) + \phi^*\left(\frac{F_i}{\sqrt{c_i^s c_i^g}} \right) \right) \geq \sqrt{c_i^s c_i^g} \phi^*\left(\frac{F_i}{\sqrt{c_i^s c_i^g}}\right) \geq \phi^*(F_i) \geq 0. \]
The first inequality follows from the nonnegativity of $\phi$ and the second from the convexity of $\phi^*$ combined with $\phi^*(0)=0$. We have derived the \emph{weak dissipation inequality}: for some $C_s, C_g> 0$,
\begin{equation}
\label{eq:dissipation-weak}
\frac{d}{dt} \cH[\bbc(t),X(t)] + C_s \int_0^{X(t)} |\partial_x \bbc(t)|^2\, dx  + C_g \int_{X(t)}^1 |\partial_x \bbc(t)|^2\, dx  + \sum_{i=1}^n \phi^*(F_i(t)) \leq 0.
\end{equation}

\begin{remark}[Extension of the model]
\label{rk:limits}
We have seen in the derivation of the dissipation equality \eqref{eq:dissip} that in fact a more general equality holds:
\begin{equation*}
  \label{eq:dissip-bis}
  \begin{aligned}
\frac{d}{dt} \cH[\bbc(t),X(t)] &+ \int_0^{X(t)} \partial_x \log(\bbc(t))^T \bbM_s(\bbc(t))\partial_x \log(\bbc(t)) ~ dx \\
&+ \int_{X(t)}^1 \partial_x \log(\bbc(t))^T \bbM_g(\bbc(t))  \partial_x \log(\bbc(t)) ~ dx  \\
&+ \bbF(t)^T \llbracket\bbmu(t)\rrbracket - X'(t) \left\llbracket \pi(\bbc(t))\right\rrbracket=0,
\end{aligned}
\end{equation*}
where we have introduced the thermodynamic pressure, for $\alpha \in \{s,g\}$, 
\begin{equation}
    \label{eq:pressure}
    \pi_\alpha(\bbc) = \bbc^T \bbmu_\alpha(\bbc) - h_\alpha(\bbc),
\end{equation}
and $ \llbracket \pi(\bbc) \rrbracket := \pi_g(\bbc^g) - \pi_s(\bbc^s)$. The term $X'(t) \left\llbracket \pi(\bbc) \right\rrbracket$ happens to be null in our case, but we have identified three different contributions to free energy dissipation: the two first terms account for bulk diffusion; the term $\bbF(t)^T \llbracket\bbmu\rrbracket$ accounts for "reactions" at the interface, driven by a jump of chemical potentials; the last term $X'(t) \llbracket\pi(\bbc)\rrbracket$ accounts for a displacement of the interface driven by a jump of pressure. It is worth noticing that, if the volume-filling constraints were not normalized to the same constant in \eqref{eq:constraints} (which would be physically relevant, since the molar volumes are not expected to be equal in the two phases), then there would be a (constant) nonzero contribution of $\llbracket\pi(\bbc)\rrbracket$ to the dissipation of the free energy. To go further in the modelling, one may question the relevance of the isobaric assumption (or incompressibility) in the context of vapor deposition. This assumption led to the saturation constraint in the gaseous phase, and is fundamental for the Stefan-Maxwell model. Going beyond it would lead us to implement a different model to describe a \emph{compressible} fluid mixture. In this model, the pressure $\pi$ may become a proper time-dependent variable, and equation \eqref{eq:dissip} would suggest defining an evolution law for the location of the interface of the form 
\begin{equation*}
X' ~\in ~ \partial_{\pi} \psi(\bbc^s, \bbc^g, \blue{-} \llbracket\pi\rrbracket),
\end{equation*}
for some function $\psi: \left\{ \begin{array}{ccc}
\mathcal A \times \mathcal A \times \mathbb{R} & \to & \mathbb{R}_+\\
(\bbc_1, \bbc_2, \pi) & \mapsto &\psi(\bbc_1, \bbc_2, \pi)\\
\end{array}
\right.$ that is convex with respect to its last variable to ensure dissipation. This goes beyond the scope of this work.

\end{remark}

\subsection{Stationary states} 
\label{sec:stationary}
One deduces from the weak dissipation inequality \eqref{eq:dissipation-weak} that stationary solutions $(\boldsymbol{\ovc},\ovX)$ must be such that $\boldsymbol{\ovc}$ is equal to a constant vector $\boldsymbol{\ovc}^s:=(\ovc_i^s)_{i\in \{1, \ldots, n\}}\in \mathcal A$ in $(0,\ovX)$ and another constant vector $\boldsymbol{\ovc}^g:=(\ovc_i^g)_{i\in \{1, \ldots, n\}}\in \mathcal A$ in $(\ovX, 1)$. Moreover, if $\ovX \in (0,1)$, $\overline{F}_i:= \sqrt{\beta_i^*} \ovc_i^g - \frac{1}{\sqrt{\beta_i^*}}\ovc_i^s$ should be equal to $0$ for any $i \in \{1,\dots,n\}$. We set $\bbm^0:= \displaystyle \int_0^1 \bbc^0$ (remember that $\bbc^0$ is the initial condition of $\bbc$ given by \eqref{eq:init}), and denote by $m_i^0$ the $i^{th}$ component of $\bbm^0$ for all $i\in\{1, \ldots, n\}$. 
\begin{definition}\label{def:stationary}
A state $(\boldsymbol{\ovc}^s, \boldsymbol{\ovc}^g, \ovX)\in \mathcal A \times \mathcal A \times [0,1]$ is said to be a stationary state of model~\eqref{eq:model} if and only if
\begin{itemize}
\item[(i)] for all $i\in \{1, \ldots,n\}$, $ \ovX \ovc_i^s + (1-\ovX)\ovc_i^g = m_i^0$ (mass conservation);
\item[(ii)] if $\ovX = 0$ (respectively $\ovX = 1$), then $\boldsymbol{\ovc}^s =0$ (respectively $\boldsymbol{\ovc}^g =0$) (convention in the case of a pure phase);
\item[(iii)] if $\ovX\in (0,1)$, then for all $i\in \{1, \ldots,n\}$, $\overline{F}_i:= \sqrt{\beta_i^*} \ovc_i^g -  \frac{1}{\sqrt{\beta_i^*}}\ovc_i^s = 0$ (zero interface flux in the case of two phases). 
\end{itemize}
\end{definition}
We characterize the set of stationary states of \eqref{eq:model} in the sense of Definition~\ref{def:stationary}, as stated in Proposition~\ref{prop:stationary}. 
\begin{proposition}[Stationary states]
  \label{prop:stationary}
Let us assume that $m_i^0 >0$ for all $i\in \{1, \ldots,n\}$. In addition to the trivial pure phase stationary states $(\bbm^0,0,1)$ and $(0,\bbm^0,0)$, we can characterize the set of stationary states of model~\eqref{eq:model} (in the sense of Definition~\ref{def:stationary}) as follows: \newline

\noindent{\bfseries Case~1:} (Indistinguishable phases) If $\beta_i^* = 1$ for all $i\in \{1, \ldots,n\}$, then the set of non-trivial stationary states is equal to the set of vectors of the form $(\bbm^0,\bbm^0,\ovX)$ with $\ovX\in (0,1)$.  \newline

\noindent{\bfseries Case~2:} (Distinguishable phases) If there exists $i_0\in \{1, \ldots, n\}$ such that $\beta_{i_0}^* \neq 1$, then there exists a non-trivial stationary solution (i.e. such that $\ovX \in (0,1)$) if and only if  
  \begin{equation}
      \label{eq:two-phase-condition}
      \min\l(\sum_{i=1}^n m_i^0 \beta_i^*,\sum_{i=1}^n m_i^0 \frac{1}{\beta_i^*}\r) > 1.
  \end{equation}
  In addition, if \eqref{eq:two-phase-condition} is satisfied, uniqueness holds. 
  \end{proposition}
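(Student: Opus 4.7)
The plan is to analyze the two cases separately, in each case combining the algebraic consequences of Definition~\ref{def:stationary}(i) and (iii) with the saturation constraints $\sum_i \ovc_i^s = \sum_i \ovc_i^g = 1$.

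\textbf{Case~1.} When $\beta_i^*=1$ for every $i$, condition (iii) gives $\ovc_i^s=\ovc_i^g$, so the mass conservation (i) immediately yields $\ovc_i^s=\ovc_i^g=m_i^0$. Conversely any triple $(\bbm^0,\bbm^0,\ovX)$ with $\ovX\in(0,1)$ satisfies (i)--(iii), which gives the claimed characterization.

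\textbf{Case~2.} Here I would first use (iii) to eliminate $\ovc_i^s=\beta_i^*\ovc_i^g$ and then solve (i) to obtain the explicit parametrization
\begin{equation*}
\ovc_i^g = \frac{m_i^0}{1+\ovX(\beta_i^*-1)}, \qquad \ovc_i^s = \frac{\beta_i^* m_i^0}{1+\ovX(\beta_i^*-1)}, \qquad i\in\{1,\ldots,n\},
\end{equation*}
whose denominators $1+\ovX(\beta_i^*-1)=(1-\ovX)+\ovX\beta_i^*$ are positive for every $\ovX\in(0,1)$, so the expressions are automatically nonnegative. Introducing
\begin{equation*}
f(X):=\sum_{i=1}^n \frac{m_i^0}{1+X(\beta_i^*-1)}, \qquad g(X):=\sum_{i=1}^n \frac{\beta_i^* m_i^0}{1+X(\beta_i^*-1)},
\end{equation*}
and using $\sum_i m_i^0=1$ (which follows from $\bbc^0\in\cA$ a.e.), a short manipulation of $(\beta_i^*-1)/(1+X(\beta_i^*-1))$ yields the identity $X\bigl(g(X)-f(X)\bigr) = 1-f(X)$. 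Hence, for $\ovX\in(0,1)$, the two saturation constraints $f(\ovX)=1$ and $g(\ovX)=1$ both reduce to the single scalar equation $f(\ovX)=1$.

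It remains to analyze $f$ on $[0,1]$. One computes $f(0)=\sum_i m_i^0=1$, $f'(0)=1-\sum_i m_i^0\beta_i^*$, $f(1)=\sum_i m_i^0/\beta_i^*$, and checks that $f''>0$ on $[0,1]$ (strict convexity) thanks to the existence of some $\beta_{i_0}^*\neq 1$ with $m_{i_0}^0>0$. Strict convexity already implies that $f-1$ has at most one zero in $(0,1)$, giving uniqueness for free. For sufficiency of \eqref{eq:two-phase-condition}, both inequalities translate into $f'(0)<0$ and $f(1)>1$; combined with $f(0)=1$, the intermediate value theorem produces a root $\ovX\in(0,1)$. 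For necessity, if $\ovX\in(0,1)$ satisfies $f(\ovX)=1$, then the strictly convex function $f-1$ has two zeros $\{0,\ovX\}$, which forces $f<1$ on $(0,\ovX)$ and $f>1$ on $(\ovX,1]$, yielding simultaneously $f'(0)<0$ and $f(1)>1$.

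The main obstacle is the reduction of the two saturation constraints to the single scalar equation $f(\ovX)=1$ via $\sum_i m_i^0=1$; once this collapse is established, existence, uniqueness, and the sharp equivalence with \eqref{eq:two-phase-condition} all follow from a standard strict-convexity / IVT argument applied to $f$.
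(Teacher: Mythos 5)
Your proposal is correct and follows essentially the same route as the paper: reduce via (iii) and (i) to the explicit parametrization $\ovc_i^g = m_i^0/(\ovX\beta_i^*+(1-\ovX))$, observe that the saturation constraints collapse to the single scalar equation $f(\ovX)=1$, and conclude by strict convexity of $f$ together with $f(0)=1$, $f'(0)<0$, $f(1)>1$. The only differences are cosmetic: you make explicit the identity $X(g(X)-f(X))=1-f(X)$ that the paper leaves as an "immediate" consequence, and you skip the paper's preliminary contradiction argument for positivity of the components, which is indeed redundant since positivity follows directly from the explicit formula and $m_i^0>0$.
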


\begin{proof}
Since $\bbc^0(x) \in \cA$ for almost any $x \in (0,1)$, it holds that $\displaystyle \sum_{i=1}^n m_i^0 = 1$. Moreover, it can be easily proved that for all $x\in \mathbb{R}_+^*$, $x + \frac{1}{x}\geq 2$ with equality if and only if $x=1$. As a consequence, it holds that
\[ \sum_{i=1}^n m_i^0 (\beta_i^* + \frac{1}{\beta_i^*}) \geq 2, \]
with equality if and only if $\beta_i^* = 1$ for all $i\in \{1, \ldots, n\}$. We can thus distinguish two cases:

\medskip  

\noindent {\bfseries Case~1:} For all $i\in \{1, \ldots, n\}$, $\beta_i^* = 1$. Then, it holds that 
$$
\sum_{i=1}^n m_i^0 \beta_i^* = \sum_{i=1}^n m_i^0 \frac{1}{\beta_i^*} = 1.
$$
{\bfseries Case~2:} There exists $i_0\in \{1, \ldots, n\}$ such that $\beta_{i_0}^* \neq 1$. Then, it holds that 
\[ \sum_{i=1}^n m_i^0 (\beta_i^* + \frac{1}{\beta_i^*}) > 2. \]

\noindent In this proof, we consider each case separately. 

\medskip

{\bfseries Case~1:} Let $(\boldsymbol{\ovc}^s, \boldsymbol{\ovc}^g, \ovX) \in \mathcal A \times \mathcal A \times (0,1)$ be a non-trivial stationary state of model~\eqref{eq:model} in the sense of Definition~\ref{def:stationary}. Then, from (iii), it holds that for all $i\in \{1, \ldots,n\}$, $\overline{F}_i = \ovc_i^g - \ovc_i^s = 0$, which yields $\boldsymbol{\ovc}^s = \boldsymbol{\ovc}^g$. Now, the mass conservation property (i) implies necessarily that $\boldsymbol{\ovc}^s = \boldsymbol{\ovc}^g = \bbm^0$. Conversely, for any $\ovX \in (0,1)$, $(\bbm_0, \bbm_0, \ovX)$ can be easily checked to be a stationary state of model~\eqref{eq:model}. 

\medskip

{\bfseries Case~2:} Let $(\boldsymbol{\ovc}^s, \boldsymbol{\ovc}^g, \ovX) \in \mathcal A \times \mathcal A \times (0,1)$ be a non-trivial stationary state of model~\eqref{eq:model} in the sense of Definition~\ref{def:stationary}. Let us first prove that for all $i\in \{1, \ldots, n\}$, $\ovc_i^g>0$ and $\ovc_i^s >0$, reasoning by contradiction. Indeed, if for instance there exists $i_0\in \{1, \ldots, n\}$ such that $\ovc_{i_0}^g = 0$, the fact that $\overline{F}_{i_0} = 0$ yields that $\ovc_{i_0}^s =0$ as well. This yields a contradiction with the fact that 
$$
\ovX \ovc_{i_0}^s + (1-\ovX) \ovc_{i_0}^g = m_{i_0}^0 >0. 
$$
Now, we see that (i) and (iii) are satisfied if and only if
\begin{equation}
\label{eq:defcX}
\forall i \in \{1, \ldots, n\}, \quad \ovc_i^g= \beta_i^* \ovc_i^g\; \mbox{ and } \ovc_i^g = \frac{m_i^0}{\beta_i^*\ovX + (1-\ovX)}.
\end{equation}
As a consequence, $(\boldsymbol{\ovc}^s, \boldsymbol{\ovc}^g, \ovX)$ is a stationary state in the sense of Definition~\ref{def:stationary} if and only if $\ovX$ is a solution in $(0,1)$ to 
\begin{equation}\label{eq:solX}
\sum_{i=1}^n \frac{m_i^0}{\beta_i^* \ovX + (1-\ovX)} = 1.
\end{equation}
In addition, for any solution $\ovX \in (0,1)$ to~\eqref{eq:solX}, $\boldsymbol{\ovc}^g$ and $\boldsymbol{\ovc}^s$ are necessarily given by~\eqref{eq:defcX}, which immediately implies that $\displaystyle \sum_{i=1}^n \ovc_i^s = \sum_{i=1}^n \ovc_i^g = 1$ and thus that $\boldsymbol{\ovc}^g$ and $\boldsymbol{\ovc}^s$ belong to $\mathcal A$. It thus remains to characterize the set of solution $\ovX\in (0,1)$ to~\eqref{eq:solX}. Let us introduce
\begin{equation}
\label{eq:varphi}
    \varphi: \left\{\begin{array}{ccc}
    [0,1] & \to & \mathbb{R}\\
    x & \mapsto & \sum_{i=1}^n \frac{m_i^0}{\beta_i^* x + (1-x)} - 1.
\end{array}\right. 
\end{equation}
Then, the function $\varphi$ is $\mathcal C^\infty$ on $[0,1]$ and its first and second-order derivatives are respectively given by
$$
\forall x\in [0,1], \quad \varphi'(x) = - \sum_{i=1}^n \frac{(\beta_i^*-1)m_i^0}{(\beta_i^* x + (1-x))^2} \; \mbox{ and } \varphi''(x) = 2 \sum_{i=1}^n \frac{(\beta_i^*-1)^2m_i^0}{(\beta_i^* x + (1-x))^3}.
$$
Then, the function $\varphi$ enjoys the following properties. First, it can be easily seen that $\varphi''(x)>0$ for all $x\in[0,1]$, the strict positivity stemming from the fact that there exists at least one index $i_0\in \{1, \ldots, n\}$ such that $\beta_{i_0}^* \neq 1$. Hence $\varphi$ is strictly convex. Second, it holds that $\varphi(0) = \sum_{i=1}^n m_i^0 - 1 = 0$. Thus, there exists at least one solution $\ovX \in (0,1)$ to the equation $\varphi(\ovX) = 0$ if and only if 
\begin{equation}\label{eq:condphi}
\varphi(1)>0 \; \mbox{ and }\; \varphi'(0) < 0, 
\end{equation}
and the solution is then unique. The desired result is then obtained by remarking that \eqref{eq:condphi} is equivalent to \eqref{eq:two-phase-condition}. 
\end{proof}

Let us now make some remarks about the dynamics. On the one hand, if \eqref{eq:two-phase-condition} is violated, we expect solutions to converge in finite time to one of the one-phase solutions, depending on which quantity violates the condition. On the other hand, under condition \eqref{eq:two-phase-condition}, we expect the two-phases stationary state to be the only stable one, and the solution to converge exponentially to this state. Note, however, that in our model, this convergence can by no means hold for any initial condition, but at best for \emph{close enough} initial conditions. Indeed, the interface dynamics only depends on the \emph{local} concentrations around the interface $\bbc^s, \bbc^g$. Thus, the interface is not necessarily monotone over time (think of very slow diffusion) and, since the value of $X(t)$ might reach $0$ or $1$ in finite time, the dynamics may get "trapped" in a one-phase solution, even when \eqref{eq:two-phase-condition} holds. Therefore, it seems difficult to predict to which state the dynamics converges for any initial condition, since it certainly does not depend \emph{only} on the quantities involved in condition \eqref{eq:two-phase-condition}. We refer to the numerical results in Section~\ref{sec:numerics}.

\subsection{Stability in a simplified setting}
\label{sec:stab}
We address in this section the stability of the two-phase equilibrium in a simplified model. Assuming that diffusion is infinitely fast in comparison to the interface reactions, our system amounts to a system of ordinary differential equations. More precisely, we may first assume that all the components are uniform in space and still denoted by $\bbc^s,\bbc^g$. Then, the masses are simply defined by $\bbm^s = \bbc^s X, ~\bbm^g = \bbm^0 - \bbm^s = (1-X) \bbc^g$ and the constraints $\bbc^s, \bbc^g \in \cA, X \in (0,1)$ are equivalent to $(\bbm^s,X) \in \cM$ where
\begin{equation}
    \label{eq:constr-mass}
    \cM := \l\{ (\bbm,X) \in \R^{n+1} ~|~ 0 < m_i < m_i^0, ~ \forall i \in \{1,\dots,n\} ; ~ 0 < X < 1; ~ \sum_{i=1}^n m_i = X \r\}.
\end{equation}
The dynamics then reduces to the system of ordinary differential equations
\begin{equation}
    \label{eq:simplified-dynamics}
    \begin{aligned}
    \frac{d}{dt} \bbm^s(t) &=  \bbF(t),
    \end{aligned}
\end{equation}
associated to the free energy 
\begin{equation}
  \label{eq:free-energy-simplified}
  \overline{\cH}[\bbm^s,X] = X \, h_s \l(\bbc^s(\bbm^s)\r) + (1-X) \, h_g \l( \bbc^g(\bbm^s)\r).
\end{equation}
$\overline{\cH}$ is smooth in $(0,m_1^0) \times \dots \times (0,m_n^0) \times (0,1)$ and, imitating the computations of Section~\ref{sec:entropy}, we obtain 
\begin{align*}
\partial_{m_i^s} \overline{\cH} &= X \partial_{m_i^s} h_s + (1-X) \partial_{m_i^s} h_g = X \partial_{m_i^s} c_i^s \partial_{c_i^s} h_s + (1-X) \partial_{m_i^s} c_i^g \partial_{c_i^g} h_g = \mu_i^s - \mu_i^g,
\end{align*}
and 
\begin{align*}
    \frac{\partial \overline{\cH}}{\partial X} = h_s - \sum_{i=1}^n c_i^s \mu_i^s - \l( h_g - \sum_{i=1}^n c_i^g \mu_i^g \r) = \sum_{i=1}^n \l(c_i^g - c_i^s \r).
\end{align*}
Therefore, for any $(\bbm^s,X) \in \cM$, it holds 
\begin{equation}
    \label{eq:critical}
        \nabla_{\bbm^s} \overline{\cH} = - \llbracket \bbmu \rrbracket, \;  \frac{\partial \overline{\cH}}{\partial X} = 0.
\end{equation}
The free energy dissipation equality boils down to
\begin{equation}
\label{eq:dissipation-simplified}
\frac{d}{dt} \overline{\cH}(t)  + \sum_{i=1}^n \sqrt{c_i^s(m_i^s(t)) c_i^g(m_i^g(t))} \left( \phi( \llbracket \mu_i(m_i^s(t))\rrbracket) + \phi^*\left(\frac{F_i(t)}{\sqrt{c_i^s(m_i^s(t)) c_i^g(m_i^g(t))}}\right) \right)  = 0.
\end{equation}
Note that the latter identity refers to a generalized gradient flow formulation of the system \cite[Section 3]{mielkeIntroductionAnalysisGradients2023}. 
In addition, we have the following result:
\begin{proposition}
\label{prop:stab}
Assume that phases are distinguishable (Case 2 of Proposition~\ref{prop:stationary}), that \eqref{eq:two-phase-condition} is satisfied and let $\l(\overline{\bbm}^s,\ovX\r)$ be the unique non-trivial stationary state obtained in Proposition~\ref{prop:stationary}. Then this state is stable for the dynamics \eqref{eq:simplified-dynamics}.
\end{proposition}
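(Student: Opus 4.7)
The strategy is a Lyapunov argument: use $V(\bbm^s) := \overline{\cH}[\bbm^s,X] - \overline{\cH}[\overline{\bbm}^s,\ovX]$ as Lyapunov function on the constraint manifold $\cM$, combining the dissipation identity \eqref{eq:dissipation-simplified} with strict convexity of $\overline{\cH}$ at the equilibrium. First I would note that the dynamics \eqref{eq:simplified-dynamics} preserves $\cM$: since $X'(t) = \sum_i F_i(t) = \frac{d}{dt}\sum_i m_i^s(t)$, the constraint $X = \sum_i m_i^s$ is conserved, so the system can be viewed as an autonomous ODE for $\bbm^s$ alone on the open polytope $\{\bbm^s : 0<m_i^s<m_i^0,\ 0<\sum_i m_i^s <1\}$, with $X$ slaved to $\bbm^s$. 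Restricting $\overline{\cH}$ to this parametrization as $\tilde{\cH}(\bbm^s)$ and using \eqref{eq:critical} together with $\partial_X \overline{\cH}=0$, one gets $\nabla_{\bbm^s} \tilde{\cH} = -\llbracket \bbmu \rrbracket$, so interior critical points of $\tilde{\cH}$ coincide exactly with the non-trivial stationary states characterized in Proposition~\ref{prop:stationary}; by the uniqueness statement there, $\overline{\bbm}^s$ is the only one.

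Next I would establish strict convexity of $\overline{\cH}$ at the equilibrium. Observe that $\overline{\cH}$ is the sum of the perspective functions $X h_s(\bbm^s/X)$ and $(1-X) h_g((\bbm^0-\bbm^s)/(1-X))$ of the strictly convex entropy densities $h_s, h_g$. Each perspective function is convex but positively homogeneous of degree one, with Hessian degenerating along one radial direction: $v_1 = (\bbm^s,X)$ for the solid term and $v_2 = (\bbm^0-\bbm^s,1-X)$ for the gaseous term. The Hessian of the sum is therefore positive definite unless $v_1 \parallel v_2$, which happens if and only if $\bbc^s = \bbc^g$ componentwise. At the equilibrium, relation \eqref{eq:defcX} gives $\ovc_i^s = \beta_i^*\ovc_i^g$, and the Case~2 hypothesis provides some $\beta_{i_0}^*\neq 1$, so $\bbc^s \neq \bbc^g$ and the Hessian is positive definite at $(\overline{\bbm}^s,\ovX)$. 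By continuity, $\overline{\cH}$ is strictly convex on a neighborhood of that point, so $V \geq 0$ with equality only at the equilibrium on that neighborhood.

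Finally, $\dot V \leq 0$ follows directly from \eqref{eq:dissipation-simplified}, since every dissipation term $\sqrt{c_i^s c_i^g}\bigl(\phi(\llbracket \mu_i\rrbracket) + \phi^*(F_i/\sqrt{c_i^s c_i^g})\bigr)$ is nonnegative ($\phi,\phi^*\geq 0$). A standard application of Lyapunov's stability theorem then yields stability of $(\overline{\bbm}^s,\ovX)$. The main obstacle will be the null-direction computation for the sum of two perspective functions: one must verify carefully that the two radial degeneracies of the $s$- and $g$-terms are not parallel at the equilibrium, which is precisely where the distinguishability hypothesis enters in a critical way; everything else (convexity, monotonicity of $V$, Lyapunov conclusion) is essentially routine.
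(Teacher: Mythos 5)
Your proof is correct and follows the same overall skeleton as the paper's: identify $\l(\overline{\bbm}^s,\ovX\r)$ as a critical point of $\overline{\cH}$ via \eqref{eq:critical}, establish strict convexity of $\overline{\cH}$ at that point, and conclude stability from the dissipation property $\frac{d}{dt}\overline{\cH}\leq 0$. Where you genuinely diverge is in the convexity computation. The paper decomposes \emph{by species}, writing $\overline{\cH}(\bbm^s,X)=\sum_{i=1}^n\psi_i(m_i^s,X)$ and checking each $2\times 2$ Hessian by hand: $\mathrm{Tr}\,D^2\psi_i>0$ and $\det D^2\psi_i=\frac{1}{X(1-X)}\frac{(c_i^s-c_i^g)^2}{c_i^sc_i^g}\geq 0$, with strict positivity for at least one index because $\bbbeta^*\neq(1,\dots,1)^T$. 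You decompose \emph{by phase}, recognizing $\overline{\cH}$ as the sum of the perspective functions of $h_s$ and $h_g$, each positively homogeneous of degree one with a one-dimensional radial kernel spanned by $(\bbc^s,1)$ and $(\bbc^g,1)$ respectively, so that the sum is positive definite precisely when $\bbc^s\neq\bbc^g$ --- which is exactly the same degeneracy condition the paper's determinant formula detects, ruled out at equilibrium by $\ovc_{i}^s=\beta_{i}^*\ovc_{i}^g$ and $\beta_{i_0}^*\neq 1$. Your route is more structural and explains \emph{why} the determinant vanishes when $c_i^s=c_i^g$ (it is the radial degeneracy of a perspective function), and it generalizes immediately to non-separable entropy densities; the paper's species-wise computation is more elementary and delivers the explicit formula for $\det D^2\psi_i$ without invoking homogeneity. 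One cosmetic point: your kernel argument implicitly uses that the Hessians of $h_s,h_g$ are positive definite on the open orthant (they are, being $\mathrm{diag}(1/c_i)$), which is worth stating since the one-dimensionality of each radial kernel depends on it. Your additional observation that the constraint $X=\sum_i m_i^s$ is propagated by the dynamics, so the system is a well-posed autonomous ODE on $\cM$, is a useful precision the paper leaves implicit.
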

\begin{proof}
First, it follows from \eqref{eq:critical}, the point iii) of Definition~\ref{def:stationary} and the definition of the interface fluxes \eqref{eq:BV-var} that $\l(\overline{\bbm}^s,\ovX\r)$ is a critical point of $\overline{\cH}$. Let us now show that $\overline{\cH}$ is strictly convex at $\l(\overline{\bbm}^s,\ovX\r)$, from which it will follow that the stationary state is in fact a strict local minimizer of $\overline{\cH}$ and is therefore stable, since $\frac{d \overline{\cH}}{dt} \leq 0$. Let us define the functions
\[\begin{aligned}
    \phi_i^\alpha(c) &= c(\log(c) + \mu_i^{*,\alpha}) - c + 1, ~ c>0, ~ \alpha \in \{s,g\}, ~ 1 \leq i \leq n, \\
    \psi_i(m_i^s,X) &= X \phi_i^s(c_i^s(m_i^s)) + (1-X) \phi_i^g(c_i^g(m_i^s)), \qquad 1 \leq i \leq n, 
\end{aligned} 
\]
such that
\begin{equation}
\label{eq:sep}
\overline{\cH}(\bbm^s, X) = \sum_{i=1}^n \psi_i(m_i^s,X).
\end{equation}
Let us fix $i \in \{1,\dots,n\}$ and compute, for any $(m_i^s,X) \in (0,m_i^0) \times (0,1)$,
\begin{align*}
\frac{\partial^2 \psi_i }{\partial X^2}(m_i^s,X) =\; & \frac{m_i^s}{X^2} +  \frac{m_i^g}{(1-X)^2} > 0, 
\\
\frac{\partial^2 \psi_i}{\partial (m_i^s)^2}(m_i^s,X) =\; & \frac{1}{m_i^s}  +  \frac{1}{m_i^g} > 0, 
\\
\frac{\partial^2 \psi_i}{\partial m_i^s \partial X}(m_i^s,X) =\; & - \frac{1}{X} - \frac{1}{1-X}.
\end{align*}
Then  
\[
\mathrm{Tr} (D^2 \psi_i) > 0 \text{ and } \det D^2\psi_i = \frac1{X(1-X)} \frac{\left( c_i^s - c_i^g \right)^2}{c_i^s c_i^g} \geq 0.  
\]
Therefore $\psi_i$ is convex for any $i \in \{1,\dots,n\}$ and so is $\overline{\cH}$, according to \eqref{eq:sep}. Besides, because of \eqref{eq:defcX} and the assumption that $\bbbeta^* \neq (1,\dots,1)^T$, at least one of the previous determinant is strictly positive at $\l(\overline{\bbm}^s,\ovX\r)$, which implies that $\overline{\cH}$ is strictly convex at this state. 
\end{proof}

The study of the stability of stationary states in more complex settings will be the object of future research work.

\section{Finite volume scheme}
\label{sec:fv}
This section is devoted to the finite volume approximation of system \eqref{eq:model}. In Section~\ref{sec:mesh}, we introduce a space-time discretization of the domain and some useful notations. The scheme is presented in two steps: in Section~\ref{sec:conservation}, we discretize the conservation laws, while Section~\ref{sec:pp} is devoted to the mesh displacement. 

\medskip 

In this section, we restrict ourselves to the case where the cross-diffusion matrix application for the solid (respectively gaseous) phase is given by Example~1 (respectively Example~2) of Section~\ref{sec:ass}.

\subsection{Discretization}
\label{sec:mesh}
We consider $N \in \N^*$ reference cells of uniform size $\Delta x = \frac{1}{N}$. The $N+1$ edge vertices are denoted by $0=x_{\frac{1}{2}} \leq x_{\frac{3}{2}}\leq \dots\leq x_{N+\frac{1}{2}}=1$. More precisely, $x_{K+\frac{1}{2}} = K\Delta x$ for all $K\in \{0, \ldots, N\}$. We consider a time horizon $T >0$ and a time discretization with mesh parameter $\Delta t$ defined such that $N_T \Delta t = T$ with $N_T \in \N^*$. The concentrations are discretized as $\bbc^p = (c^p_{i,K})_{i \in \{1,\dots,n\},~ K \in \{1,\dots,N\}}$ for $p \in \{0,\dots,N_T\}$. The interface is time-discretized as $X^p$ for $p \in \{0,\dots,N_T\}$, and we denote by $K^p \in \{0, \ldots, N\}$ the lowest integer such that $|x_{K^p+\frac{1}{2}} - X^p| \leq |x_{K+\frac{1}{2}} - X^p|$ for all $K \in \{0, \ldots, N\}$. For all $p\geq 1$, at time $t^{p-1}=(p-1)\Delta t$, the mesh is locally modified around $X^{p-1}$. More precisely, for all $K\in \{1, \ldots, N\}$, we denote by $C_K^{p-1}$ the $K^{th}$ cell of the mesh defined by
$$
C_K^{p-1}:=\begin{cases}
  (x_{K-\frac{1}{2}}, x_{K+\frac{1}{2}}) & \text{if } K < K^{p-1} \mbox{ or } K> K^{p-1} +1,\\
    (x_{K^{p-1}-\frac{1}{2}}, X^{p-1})  & \text{if } K=K^{p-1}, \\
    (X^{p-1}, x_{K^{p-1}+\frac{3}{2}}) & \text{if } K=K^{p-1}+1.
\end{cases}
$$
We refer to the initial configuration in Figure \ref{fig:displacement}, where the interface cell is assumed to be the $K^{th}$ one (instead of $K^{p-1}$) to alleviate the notation. The size of the cell $C_K^{p-1}$ is then denoted by $\Delta_K^{p-1}$ for all $K\in \{1, \ldots, N\}$: 
\begin{equation}
\label{eq:size-cell}
\Delta_K^{p-1} = 
\begin{cases}
  (X^{p-1} - x_{K^{p-1}-\frac{1}{2}}) & \text{if } K=K^{p-1},\\
  (x_{K^{p-1}+\frac{3}{2}}-X^{p-1}) & \text{if } K=K^{p-1}+1, \\
  \Delta x & \text{otherwise.} 
\end{cases}
\end{equation}
With these notations, an initial condition $\bbc^0$ such that $\bbc^0(x) \in \cA$ for almost all $x \in (0,1)$ is naturally discretized as $ c_{i,K}^0 = \frac{1}{\Delta_K^0} \int_{C_K^0} c_i^0 ~ dx $ for any $i\in \{1,\dots,n\}$, $K\in \{1,\dots,N\}$. 
Starting from the knowledge of $(\bbc^{p-1}, X^{p-1})$, our scheme consists in
\begin{itemize}
\item[i)] solving the conservation laws and updating the interface position, leading to $(\bbc^{p,\star},X^p)$, where $\bbc^{p,\star} = (c_{i,K}^{p,\star})_{i\in \{1, \ldots, n\}, K\in \{1, \ldots, N\}}\in (\mathbb{R}^n)^N$ is a set of intermediate cell values of the concentrations and $X^p \in [0,1]$.
\item[ii)] updating the cells of the mesh $(C_K^p)_{K\in \{1, \ldots, N\}}$ and post-processing the interface concentrations into the final values $\bbc^p$.
\end{itemize}

Section~\ref{sec:conservation} describes the scheme corresponding to step (i), while Section~\ref{sec:pp} describes the scheme corresponding to step (ii).

\subsection{First step: conservation laws}
\label{sec:conservation}
 The conservation laws \eqref{eq:conservation} are discretized implicitly as, for $K \in \{1,\dots,N\}, i \in \{1,\dots,n\}$, 
\begin{subequations}
\label{eq:scheme}
\begin{equation}
\label{eq:conservation-discr}
\frac{1}{\Delta t} (\Delta_{K}^{p,\star} c^{p,\star}_{i,K}- \Delta_{K}^{p-1}c_{i,K}^{p-1}) + J^{p}_{i,K+\frac{1}{2}}\left(\bbc^{p,\star}\right) -J^{p}_{i,K-\frac{1}{2}}\left( \bbc^{p,\star}\right) = 0,
\end{equation}
where we have introduced the numerical fluxes $J^{p}_{i,K+\frac{1}{2}}\left(\bbc^{p,\star}\right) $ and $J^{p}_{i,K-\frac{1}{2}}\left( \bbc^{p,\star}\right)$ which will be defined below and the quantity (see the intermediate mesh in Figure~\ref{fig:displacement} where $K := K^{p-1}$)
\begin{equation}
  \label{eq:size-cell-semi-impl}
  \Delta_K^{p,\star} = 
  \begin{cases}
    (X^p - x_{K^{p-1}-\frac{1}{2}}) & \text{if } K=K^{p-1},\\
    (x_{K^{p-1}+\frac{3}{2}}-X^p) & \text{if } K=K^{p-1}+1, \\
    \Delta x & \text{otherwise.} 
  \end{cases}
\end{equation}
We can impose conditions on the time step $\Delta t$ to guarantee that the new position of the interface $X^p$ remains in the interval $(x_{K^{p-1}-\frac{1}{2}}, x_{K^{p-1}+\frac{3}{2}})$. These conditions are made explicit in the next section and we assume that they hold here. The aim of the term $\frac{1}{\Delta t} (\Delta_{K}^{p,\star} c^{p,\star}_{i,K}- \Delta_{K}^{p-1}c_{i,K}^{p-1})$ for $K= K^{p-1}$ in~\eqref{eq:conservation-discr} is to yield the approximation
\begin{align*} 
\frac{d}{dt} \left(\int_{x_{K^{p-1}-\frac{1}{2}}}^{X(t)} c_i(t) \right)_{|_{t=t^p}} 
& \approx \frac{1}{\Delta t} \left(\int_{x_{K^{p-1}-\frac{1}{2}}}^{X(t^p)} c_i(t^p) - \int_{x_{K^{p-1}-\frac{1}{2}}}^{X(t^{p-1})} c_i(t^{p-1}) \right)\\
& \approx \frac{1}{\Delta t} \left(\int_{x_{K^{p-1}-\frac{1}{2}}}^{X^p} c_{i,K^{p-1}}^{p,\star} - \int_{x_{K^{p-1}-\frac{1}{2}}}^{X^{p-1}} c_{i,K^{p-1}}^{p-1} \right)\\
& = \frac{1}{\Delta t} (\Delta_{K^{p-1}}^{p,\star} c^{p,\star}_{i,K^{p-1}}- \Delta_{K^{p-1}}^{p-1}c_{i,K^{p-1}}^{p-1}).  \\
\end{align*}
Similarly, the aim of the term $\frac{1}{\Delta t} (\Delta_{K}^{p,\star} c^{p,\star}_{i,K}- \Delta_{K}^{p-1}c_{i,K}^{p-1})$ for $K= K^{p-1} +1$ in~\eqref{eq:conservation-discr} is to yield an approximation of $\displaystyle \frac{d}{dt} \left(\int_{X(t)}^{x_{K^{p-1}+\frac{3}{2}}} c_i(t) \right)$. 

\medskip

Let us now turn to the definition of the fluxes. It is sufficient to define for all $p\in \mathbb{N}^*$, $i\in\{1, \ldots, n\}$ and $K\in \{0, \ldots N\}$, the map $J_{i, K+\frac{1}{2}}^p(\bbc)$ for any vector $\bbc = (c_{i,K})_{i\in\{1, \ldots, n\}, K\in \{1, \ldots, N\}} \in (\R^n)^N$. Given such a vector $\bbc$, we will make use of the notation $\bbc_i := (c_{i,K})_{K\in \{1, \ldots, N\}} \in \mathbb{R}_+^N$ for all $i\in \{1, \ldots, n\}$, and $\bbc_K:=(c_{i,K})_{i\in \{1, \ldots, n\}}\in \cA$ for all $K\in \{1, \ldots, N\}$.

\medskip

First, the zero-flux conditions on the boundary of the domain $(0,1)$ are discretized by defining for all $\bbc\in (\R^n)^N$,
$$
\forall i\in \{1, \ldots, n\}, \; J^{p}_{i,\frac{1}{2}}(\bbc) = J^{p}_{i,N+\frac{1}{2}}(\bbc) = 0. 
$$
We are thus left with the definition of the fluxes $\bbJ^{p}_{K+\frac{1}{2}}:=\left(J^{p}_{i,K+\frac{1}{2}}\right)_{i\in\{1, \ldots, n\}}$ for all $K\in \{1,\ldots,N-1\}$. To this aim, we need to introduce, for a given $\bbc = (c_{i,K})_{i\in\{1, \ldots, n\}, K\in \{1, \ldots, N\}} \in (\R^n)^N$, the associated set of edge concentrations $\bbc_{K+\frac{1}{2}} = (c_{i,K+\frac{1}{2}})_{i\in\{1, \ldots, n\}}\in \mathbb{R}_+^n$ for all $K\in \{1, \ldots, N-1\}$, defined through a logarithmic mean as 
\begin{equation}\label{eq:u_isig}
c_{i,K+\frac{1}{2}} := \left\{
\begin{array}{ll}
 0 & \mbox{ if } \min(c_{i,K}, c_{i, K+1})\leq 0, \\
 c_{i,K} & \mbox{ if } 0 < c_{i,K}= c_{i, K+1}, \\
 \frac{c_{i,K} - c_{i,K+1}}{\log(c_{i,K}) - \log(c_{i,K+1})} & \mbox{ otherwise}.\\
\end{array}
\right.
\end{equation}
We also need to introduce the finite difference notation, for all $K \in \{1,\dots,N-1\}$, and any $\bbd = (\bbd_K)_{K\in \{1, \ldots, N\}} \in \left( \mathbb{R}^q \right)^N$ with any $q\in \mathbb{N}^*$,
\[ D_{K+\frac{1}{2}} \bbd := \bbd_{K+1} - \bbd_K.
\]
Then, the choice \eqref{eq:u_isig} yields a discrete chain rule: for any $i\in \{1,\dots,n\}$, $K \in \{1,\dots,N-1\}$, if $c_{i,K}, c_{i,K+1} > 0$, then
\begin{equation}
    \label{eq:chain-rule-discr}
    D_{K+\frac{1}{2}} \bbc_i = c_{i,K+\frac{1}{2}}D_{K+\frac{1}{2}} \log(\bbc_i).
\end{equation} 
We define, for $\alpha \in \{s,g\}$, the coefficients $\kappa^{*,\alpha} = \min_{ij} \kappa_{ij}^\alpha >0$, $\ovkappa_{ij}^\alpha = \kappa_{ij}^\alpha - \kappa^{*,\alpha} \geq 0$ and, for all $\bbu = (u_i)_{i\in \{1, \ldots, n\}} \in \mathbb{R}^n$, we define the matrices $\boldsymbol{\ovA}_\alpha(\bbu)\in \mathbb{R}^{n\times n}$ similarly to \eqref{eq:size-exclusion-matrix} by
\begin{equation*}
    \begin{aligned}
        (\boldsymbol{\ovA}_\alpha)_{ii}(\bbu) &= \sum_{j \neq i} \ovkappa_{ij}^\alpha u_j, ~ i \in \{1,\dots,n\}, \\
        (\boldsymbol{\ovA}_\alpha)_{ij}(\bbu) &= - \ovkappa^\alpha_{ij} u_i, ~ i \neq j \in \{1,\dots,n\}.
    \end{aligned}
\end{equation*}
Then, the bulk solid fluxes are defined as follows (similarly to \cite{cancesConvergentEntropyDiminishing2020}): for all $\bbc = (c_{i,K})_{i\in \{1, \ldots, n\}, K\in \{1, \ldots, N\}}\in (\R^n)^N$, all $i\in \{1, \ldots, n\}$ and any $K\in \{1, \ldots, N-1\}$,
\begin{equation*}
    \Delta x J_{i,K+\frac{1}{2}}^{s}(\bbc) := -\kappa^{*,s} D_{K+\frac{1}{2}} \bbc_i - \sum_{j=1}^n \ovkappa_{ij}^s \left( c_{j,K+\frac{1}{2}} D_{K+\frac{1}{2}} \bbc_i- c_{i,K+\frac{1}{2}} D_{K+\frac{1}{2}} \bbc_j \right),
\end{equation*}
which rewrites in compact form as 
 \begin{equation}
 \label{eq:bulk-flux-discr-s} 
    \Delta x \bbJ_{K+\frac{1}{2}}^{s}(\bbc) = - \widehat{\bbA}_s(\bbc_{K+\frac{1}{2}}) D_{K+\frac{1}{2}} \bbc, ~ K \in \{1,\dots,N-1\},
 \end{equation}
 where 
 \begin{equation}
     \label{eq:As-modif}
      \forall \bbu \in \mathbb{R}^n, \quad \widehat{\bbA}_s(\bbu):= \bar{\bbA_s}(\bbu) + \kappa^{*,s} \bbI,
\end{equation}
with $\bbI \in \R^{n \times n}$ the identity matrix. The bulk gas fluxes \eqref{eq:Maxwell-Stefan-flux} are defined similarly as in the scheme proposed in~\cite{cancesFiniteVolumesStefan2024}, introducing first, 
\begin{equation}
     \label{eq:Ag-modif}
      \forall \bbu \in \mathbb{R}^n, \quad \check{\widetilde{\bbA}}_g(\bbu):= \bar{\bbA_g}(\bbu) + \kappa^{*,g} \bbI,
\end{equation}
so that we can define implicitly, for any $\bbc \in (\R^n)^N$,
\begin{equation}
    \label{eq:bulk-flux-discr-g-implicit}
    \Delta x  \check{\widetilde{\bbA}}_g \left( \bbc_{K+\frac{1}{2}}\right) \bbJ^g_{K+\frac{1}{2}}(\bbc) = - D_{K+\frac{1}{2}} \bbc, ~ K \in \{1,\dots,N-1\}
\end{equation} 
We then define for all $\bbc\in (\R^n)^N$ 
\begin{equation}
\bbJ^p_{K+\frac{1}{2}}(\bbc) = \bbJ^s_{K+\frac{1}{2}}(\bbc), \; \forall 1\leq K < K^{p-1}, \mbox{ and } \bbJ^p_{K+\frac{1}{2}}(\bbc) = \bbJ^g_{K+\frac{1}{2}}(\bbc), \; \forall K^{p-1} < K \leq N-1. \
\end{equation}
In the latter formula, $\bbJ^g_{K+\frac{1}{2}}(\bbc)$ is \emph{any} solution to the system \eqref{eq:bulk-flux-discr-g-implicit}, and at this point we do not even claim existence. The well-posedness of the scheme will follow from the \emph{a priori} estimates proved in Lemma~\ref{lem:structure}, using the following lemma.
\begin{lemma}
\label{lem:inversion}
For any $\bbu \in \cA$, $\check{\widetilde{\bbA}}_g(\bbu)$ has positive eigenvalues, lower bounded by $\kappa^{*,g}>0$. In particular, $\check{\widetilde{\bbA}}_g(\bbu)$ is invertible. 
\end{lemma}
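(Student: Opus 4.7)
Writing $\check{\widetilde{\bbA}}_g(\bbu) = \bar{\bbA}_g(\bbu) + \kappa^{*,g} \bbI$ immediately reduces the claim to showing that $\bar{\bbA}_g(\bbu)$ has real nonnegative eigenvalues: invertibility of $\check{\widetilde{\bbA}}_g(\bbu)$ will then follow from $\kappa^{*,g} > 0$. Since $\bar{\bbA}_g(\bbu)$ has the same Stefan--Maxwell-type structure as $\widetilde{\bbA}_g(\bbu)$, but with the nonnegative coefficients $\bar{\kappa}_{ij}^g = \kappa_{ij}^g - \kappa^{*,g}$, the strategy is to symmetrize it via a diagonal similarity, exactly as for the original matrix in \cite{bothe2011}.

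I would first observe that, setting $D = \mathrm{diag}(u_1,\ldots,u_n)$ and using $\bar{\kappa}_{ij}^g = \bar{\kappa}_{ji}^g$, the product $\bar{\bbA}_g(\bbu) D$ is symmetric, with off-diagonal entries $-\bar{\kappa}_{ij}^g u_i u_j$ and diagonal entries $u_i \sum_{k \neq i} \bar{\kappa}_{ik}^g u_k$. A short rearrangement yields
\[
\bbz^T \bar{\bbA}_g(\bbu) D \bbz = \sum_{i<j} \bar{\kappa}_{ij}^g u_i u_j (z_i - z_j)^2 \geq 0,
\]
so $\bar{\bbA}_g(\bbu) D$ is symmetric positive semidefinite.

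When $\bbu \in \cA \cap (\R_+^*)^n$, $D^{1/2}$ is invertible and the conjugate
\[
D^{-1/2} \bar{\bbA}_g(\bbu) D^{1/2} = D^{-1/2}\bigl[\bar{\bbA}_g(\bbu) D\bigr] D^{-1/2}
\]
is a symmetric positive semidefinite matrix similar to $\bar{\bbA}_g(\bbu)$; its eigenvalues are therefore real and nonnegative, and those of $\check{\widetilde{\bbA}}_g(\bbu)$ are bounded below by $\kappa^{*,g}$.

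The main obstacle is the boundary case $\bbu \in \cA \setminus (\R_+^*)^n$, where $D$ is singular and the similarity above breaks down. I would circumvent it by exploiting a block structure rather than invoking a continuity argument. Splitting the indices into $I_+ = \{i : u_i > 0\}$ and $I_0 = \{i : u_i = 0\}$, the identity $(\bar{\bbA}_g)_{ij}(\bbu) = -\bar{\kappa}_{ij}^g u_i = 0$ for every $i \in I_0$ and $j \neq i$ shows that, after reordering the indices so that $I_+$ comes first, $\bar{\bbA}_g(\bbu)$ takes the block upper-triangular form
\[
\begin{pmatrix} A_{++} & A_{+0} \\ 0 & A_{00} \end{pmatrix},
\]
where $A_{00}$ is diagonal with nonnegative entries $\sum_{k \in I_+} \bar{\kappa}_{ik}^g u_k$ and $A_{++}$ has exactly the structure of $\bar{\bbA}_g$ evaluated at the strictly positive restriction $(u_i)_{i \in I_+}$, so the previous paragraph applies to it. The spectrum of $\bar{\bbA}_g(\bbu)$ is then the union of the spectra of $A_{00}$ and $A_{++}$, both contained in $[0,+\infty)$, concluding the argument.
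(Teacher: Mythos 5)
Your proof is correct, and its core coincides with the paper's: the matrix $\bar{\bbA_g}(\bbu)D$ you form (with $D=\mathrm{diag}(u_1,\dots,u_n)=\bbH(\bbu)^{-1}$) is exactly the modified mobility matrix $\bar{\bbM_g}(\bbu)$ that the paper invokes, and in both cases the positivity of the spectrum on $\cA\cap(\R_+^*)^n$ follows from the similarity of $\bar{\bbA_g}(\bbu)$ to the symmetric positive semidefinite matrix $D^{-1/2}\bigl[\bar{\bbA_g}(\bbu)D\bigr]D^{-1/2}$. Two points differ. First, you verify positive semidefiniteness directly through the identity $\bbz^T\bar{\bbA_g}(\bbu)D\,\bbz=\sum_{i<j}\bar\kappa_{ij}^g u_iu_j(z_i-z_j)^2$, whereas the paper appeals to Assumption (A2); your computation is self-contained and makes transparent why only $\bar\kappa_{ij}^g\ge 0$ and $u_i\ge 0$ are needed. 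Second, and more substantially, the degenerate case $\bbu\in\cA\setminus(\R_+^*)^n$ is handled in the paper by continuity of the spectrum together with the uniform lower bound $\kappa^{*,g}$, while you instead reorder the indices to exhibit the block upper-triangular structure $\begin{pmatrix} A_{++} & A_{+0}\\ 0 & A_{00}\end{pmatrix}$ with $A_{00}$ diagonal nonnegative and $A_{++}$ of the same Stefan--Maxwell form on the strictly positive sub-vector (note $I_+\neq\emptyset$ since $\sum_i u_i=1$). This gives an exact algebraic description of the boundary spectrum rather than a limiting argument; the trade-off is that you lose the diagonalizability claimed by the paper in the interior (a block-triangular matrix need not be diagonalizable), but the lemma as stated does not require it, so your argument fully establishes the claim.
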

\begin{proof}
    Let $\bbu \in \cA \cap (\R_+^*)^n$. Then it follows from Assumption (A2) on $\bbA_g$ that $\bar{\bbA_g}(\bbu)$ can be rewritten as the product
    \[ \bar{\bbA_g}(\bbu) = \bbH(\bbu) \bar{\bbM_g}(\bbu), \]
    where $\bar{\bbM_g}(\bbu)$ is symmetric positive semi-definite, and $\bbH(\bbu) = \mathrm{diag}(\frac{1}{c_1},\dots,\frac{1}{c_n})$. Hence $\bar{\bbA_g}(\bbu)$ is similar to the symmetric positive semi-definite matrix $\bbH(\bbu)^{\frac{1}{2}} \bar{\bbM_g}(\bbu) \bbH(\bbu)^{\frac{1}{2}}$, and is therefore diagonalizable with nonnegative eigenvalues. In consequence $\check{\widetilde{\bbA}}_g(\bbu)$ is diagonalizable with positive eigenvalues lower bounded by $\kappa^{*,g} > 0$. The result follows for any $\bbu \in \cA$, using this uniform lower bound and the continuity of the spectrum with respect to the matrix coefficients.
\end{proof}
Therefore, provided $\bbc \in \cA^N$, the system \eqref{eq:bulk-flux-discr-g-implicit} can be equivalently written
\begin{equation}
    \label{eq:bulk-flux-discr-g} 
    \Delta x  \bbJ^g_{K+\frac{1}{2}}(\bbc) = - \widehat{\bbA}_g(\bbc_{K+\frac{1}{2}}) D_{K+\frac{1}{2}} \bbc, ~ K \in \{1,\dots,N-1\},
\end{equation} 
where $ \widehat{\bbA}_g(\bbc_{K+\frac{1}{2}}) = \check{\widetilde{\bbA}}_g(\bbc_{K+\frac{1}{2}})^{-1}$. 

We are now left with the definition of the interface flux $\bbJ^p_{K^{p-1} + \frac{1}{2}}$. The interface fluxes \eqref{eq:BV} are discretized by introducing for all $\bbc = (c_{i,K})_{i\in \{1, \ldots, n\}, K\in \{1, \ldots, N\}}\in (\R^n)^N$,  
\begin{equation}
    \label{eq:numerical-BV}
    F_i^p(\bbc) = \sqrt{\beta_i^*}c_{i,(K^{p-1}+1)} -  \frac{1}{\sqrt{\beta_i^*}} c_{i,K^{p-1}}, ~ \forall i \in \{1,\dots,n\},
\end{equation}
and we define 
\begin{equation}\label{eq:interfaceflux}
\bbJ_{K^{p-1}+\frac{1}{2}}^{p}(\bbc) := -\bbF^{p}(\bbc),
\end{equation}
where $\bbF^{p}:= \left(F_i^{p}\right)_{i\in \{1, \ldots, n\}}$. This expression stems from the fact that, on the continuous level, it holds that
\begin{align*}
\frac{d}{dt} \left(\int_{x_{K^{p-1}-\frac{1}{2}}}^{X(t)} \bbc(t) \right) &= X'(t) \bbc^s(t) +  \int_{x_{K^{p-1}-\frac{1}{2}}}^{X(t)} \partial_t \bbc(t),\\
& = X'(t) \bbc^s(t) - \bbJ^s(t) + \bbJ(t,x_{K^{p-1}-\frac{1}{2}}),\\
& = \bbF(t)+ \bbJ(t,x_{K^{p-1}-\frac{1}{2}}).
\end{align*}
Finally, \eqref{eq:evolinter} is discretized as
\begin{equation}
\label{eq:interface-discrete}
X^p = X^{p-1} + \Delta t \sum_{i=1}^n F_i^{p}(\bbc^{p,\star}).
\end{equation}
\end{subequations}
A solution to \eqref{eq:scheme} is denoted by $(\bbc^{p,\star},X^p)$. In the sequel, we also make use of the following notation: for all $K\in \{0, \ldots, N\}$, 
$$
\bbJ_{K+\frac{1}{2}}^{p,\star}:= (\bbJ_{i,K+\frac{1}{2}}^{p,\star})_{i\in \{1, \ldots, n\}} = \bbJ^p_{K+\frac{1}{2}}(\bbc^{p,\star}) \quad \mbox{ and } \quad \bbF^{p,\star}:= (F_i^{p,\star})_{i\in \{1, \ldots, n\}} = \bbF^p(\bbc^{p,\star}).
$$

\subsection{Post-processing}
\label{sec:pp}
Once the new value of the interface location $X^p$ has been determined, the updated value of the integer $K^p$ can be computed as well. If applicable, the mesh has then to be updated, together with the discretized values of the concentrations accordingly. 

\medskip

First note that, if for all $1\leq K \leq N$, $\bbc^{p,*}_{K} := \left(c^{p,*}_{i,K}\right)_{i\in \{1, \ldots, n\}}\in \mathcal A$ (we prove in Lemma~\ref{lem:structure} below that it is indeed the case), this implies the uniform bound on the interface fluxes
\begin{equation}
    \label{eq:interface-potentials-infty}
|F_i^{p,\star}| \leq 2 |\cosh(\frac{1}{2} \llbracket\mu_i^{*} \rrbracket)|, ~  \forall i \in \{1,\dots,n\}. 
\end{equation}
Therefore, we obtain from \eqref{eq:interface-discrete}, defining 
\begin{equation}
\label{eq:constant-CFL}
    C_{\bbmu^*}:=\max_{i\in \{1, \ldots, n\}} 2 |\cosh(\frac{1}{2} \llbracket \mu_i^{*} \rrbracket)| >0,
\end{equation} for all $p\in \mathbb{N}^*$, 
\[ |X^p-X^{p-1}| \leq C_{\bbmu^*} \Delta t. \]
Assuming then that $\Delta t>0$ is chosen in order to ensure the condition
\begin{equation}
\label{eq:CFL}
\Delta t \leq \frac{\Delta x}{2C_{\bbmu^*}},
\end{equation}
we obtain that, necessarily, for all $p\in\mathbb{N}^*$, $|X^p-X^{p-1}| \leq \frac{1}{2} \Delta x$, which in particular ensures that $|K^p-K^{p-1}| \leq 1$ and that $X^p$ belongs to $(x_{K^{p-1}-\frac{1}{2}}, x_{K^{p-1}+\frac{3}{2}})$ (see~Figure~\ref{fig:displacement} with $K:=K^{p-1}$).

\medskip

If $K^p=K^{p-1}$, then we can directly iterate the scheme with $\bbc^p = \bbc^{p,\star}$. Otherwise, let us assume that $K^p = K^{p-1}+1 < N$, the case $K^p=K^{p-1}-1 > 1$ being treated similarly. We perform the following steps (see the final mesh in Figure~\ref{fig:displacement} where the notation $K := K^{p-1}$ is used):
\begin{itemize}
\item[i)] \emph{Projection:} The value $c_{i,K^{p-1}}^{p,\star}$ is assigned to the virtual cell $(x_{K^{p-1}-\frac{1}{2}},X^p)$. We assign this value to both the fixed cell $C_{K^{p-1}}^p =(x_{K^{p-1}-\frac{1}{2}},x_{K^{p-1}+\frac{1}{2}})$ and the new interface cell $C_{K^{p-1}+1}^p= C_{K^p}^p=(x_{K^{p-1}+\frac{1}{2}},X^p)$:
\begin{equation}
  \label{eq:projection}
c_{i,K^{p-1}}^p = c_{i,K^{p-1}+1}^p := c_{i,K^{p-1}}^{p,\star}.
\end{equation}
\item[ii)] \emph{Average:} We define the value in the cell $C^p_{K^{p-1}+2}=(X^p,x_{K^{p-1}+2})$ as the following average:
\begin{equation}
  \label{eq:average}
  c_{i,K^{p-1}+2}^p =  c_{i,K^{p}+1}^p := \frac{1}{\Delta x + \Delta_{K^{p}}^{p,\star}} \left[ \Delta_{K^{p}}^{p,\star} c_{i,K^{p}}^{p,\star} + \Delta x ~ c_{i,K^{p}+1}^{p,\star} \right].
\end{equation}
\item[iii)] For all $1\leq  K \leq N$, such that $K\neq K^{p-1}, K^{p-1}+1, K^{p-1}+2$,  $c_{i,K}^p = c_{i,K}^{p,\star}$. 
\end{itemize} 
In the limit cases where $K^p=N$ (resp. $K^p=1$), we consider in agreement with the continuous model that only a single phase remains in the system, and definitively set $X^p=1$ (resp. $X^p = 0$). 

\medskip

The scheme \eqref{eq:scheme}-\eqref{eq:projection}-\eqref{eq:average} is now complete and referred to as $(S)$. 
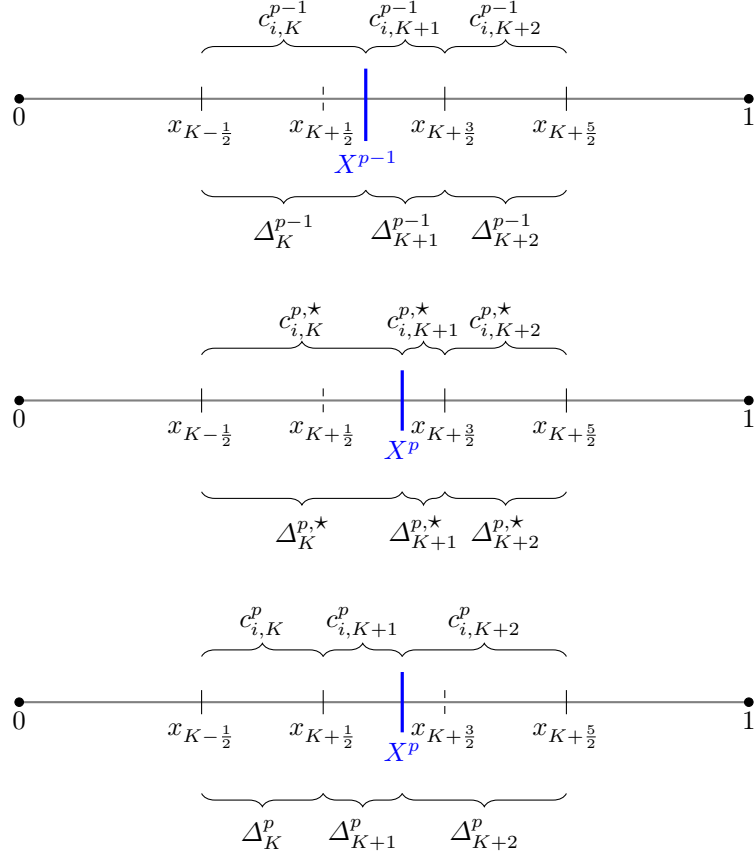
\begin{figure}
  \centering
\begin{tikzpicture}[scale=0.8]
\tikzmath{\xl=0; \xr = 12; \y=-5; \yf=-10;}

\draw[gray, thick] (\xl,0) -- (\xr,0);
\filldraw[black] (\xl,0) circle (2pt) node[below]{0};
\filldraw[black] (\xr,0) circle (2pt) node[below]{1};
\draw[dashed] (5,0.2) -- (5,-0.2) node[below]{$x_{K+\frac{1}{2}}$}; 
\draw (3,+0.2) -- (3,-0.2) node[below]{$x_{K-\frac{1}{2}}$};
\draw (7,0.2) -- (7,-0.2) node[below]{$x_{K+\frac{3}{2}}$};
\draw (9,0.2) -- (9,-0.2) node[below]{$x_{K+\frac{5}{2}}$};
\draw[very thick,blue] (5.7,0.5) -- (5.7,-0.7) node[below]{$X^{p-1}$};
\draw [decorate,decoration={brace,amplitude=5pt,raise=4ex}]
  (3,0) -- (5.7,0) node[midway, yshift=3em]{$c_{i,K}^{p-1}$};
\draw [decorate,decoration={brace,amplitude=5pt,raise=4ex}]
  (5.7,0) -- (7,0) node[midway, yshift=3em]{$c_{i,K+1}^{p-1}$};
\draw [decorate,decoration={brace,amplitude=5pt,raise=4ex}]
  (7,0) -- (9,0) node[midway, yshift=3em]{$c_{i,K+2}^{p-1}$};
\draw [decorate,decoration={brace,amplitude=5pt,raise=8ex}]
  (5.7,0) -- (3,0) node[midway, yshift=-5em]{$\Delta_K^{p-1}$};
\draw [decorate,decoration={brace,amplitude=5pt,raise=8ex}]
(7,0) -- (5.7,0) node[midway, yshift=-5em]{$\Delta_{K+1}^{p-1}$};
\draw [decorate,decoration={brace,amplitude=5pt,raise=8ex}]
  (9,0) -- (7,0) node[midway, yshift=-5em]{$\Delta_{K+2}^{p-1}$};
\draw[gray, thick] (\xl,\y) -- (\xr,\y);
\filldraw[black] (\xl,\y) circle (2pt) node[below]{0};
\filldraw[black] (\xr,\y) circle (2pt) node[below]{1};
\draw[dashed] (5,\y+0.2) -- (5,\y-0.2) node[below]{$x_{K+\frac{1}{2}}$}; 
\draw (3,\y+0.2) -- (3,\y-0.2) node[below]{$x_{K-\frac{1}{2}}$};
\draw (7,\y+0.2) -- (7,\y-0.2) node[below]{$x_{K+\frac{3}{2}}$};
\draw (9,\y+0.2) -- (9,\y-0.2) node[below]{$x_{K+\frac{5}{2}}$};
\draw[very thick,blue] (6.3,\y+0.5) -- (6.3,\y-0.5) node[below]{$X^{p}$};
\draw [decorate,decoration={brace,amplitude=5pt,raise=4ex}]
  (3,\y) -- (6.3,\y) node[midway, yshift=3em]{$c_{i,K}^{p,\star}$};
\draw [decorate,decoration={brace,amplitude=5pt,raise=4ex}]
  (6.3,\y) -- (7,\y) node[midway, yshift=3em]{$c_{i,K+1}^{p,\star}$};
\draw [decorate,decoration={brace,amplitude=5pt,raise=4ex}]
  (7,\y) -- (9,\y) node[midway, yshift=3em]{$c_{i,K+2}^{p,\star}$};
\draw [decorate,decoration={brace,amplitude=5pt,raise=8ex}]
(6.3,\y) -- (3,\y) node[midway, yshift=-5em]{$\Delta_K^{p,\star}$};
\draw [decorate,decoration={brace,amplitude=5pt,raise=8ex}]
(7,\y) -- (6.3,\y) node[midway, yshift=-5em]{$\Delta_{K+1}^{p,\star}$};
\draw [decorate,decoration={brace,amplitude=5pt,raise=8ex}]
  (9,\y) -- (7,\y) node[midway, yshift=-5em]{$\Delta_{K+2}^{p,\star}$};

\draw[gray, thick] (\xl,\yf) -- (\xr,\yf);
\filldraw[black] (\xl,\yf) circle (2pt) node[below]{0};
\filldraw[black] (\xr,\yf) circle (2pt) node[below]{1};

\draw (5,\yf+0.2) -- (5,\yf-0.2) node[below]{$x_{K+\frac{1}{2}}$}; 
\draw (3,\yf+0.2) -- (3,\yf-0.2) node[below]{$x_{K-\frac{1}{2}}$};
\draw[dashed] (7,\yf+0.2) -- (7,\yf-0.2) node[below]{$x_{K+\frac{3}{2}}$};
\draw (9,\yf+0.2) -- (9,\yf-0.2) node[below]{$x_{K+\frac{5}{2}}$};
\draw[very thick,blue] (6.3,\yf+0.5) -- (6.3,\yf-0.5) node[below]{$X^{p}$};

\draw [decorate,decoration={brace,amplitude=5pt,raise=4ex}]
  (3,\yf) -- (5,\yf) node[midway, yshift=3em]{$c_{i,K}^{p}$};
\draw [decorate,decoration={brace,amplitude=5pt,raise=4ex}]
  (5,\yf) -- (6.3,\yf) node[midway, yshift=3em]{$c_{i,K+1}^{p}$};
\draw [decorate,decoration={brace,amplitude=5pt,raise=4ex}]
  (6.3,\yf) -- (9,\yf) node[midway, yshift=3em]{$c_{i,K+2}^{p}$};
  \draw [decorate,decoration={brace,amplitude=5pt,raise=8ex}]
  (5,\yf) -- (3,\yf) node[midway, yshift=-5em]{$\Delta_K^{p}$};
  \draw [decorate,decoration={brace,amplitude=5pt,raise=8ex}]
  (6.3,\yf) -- (5,\yf) node[midway, yshift=-5em]{$\Delta_{K+1}^{p}$};
  \draw [decorate,decoration={brace,amplitude=5pt,raise=8ex}]
  (9,\yf) -- (6.3,\yf) node[midway, yshift=-5em]{$\Delta_{K+2}^{p}$};
\end{tikzpicture}
\caption{A virtual mesh displacement between $t^{p-1} = (p-1) \Delta t$ and $t^p = p \Delta t$, where $K := K^{p-1}$.}
\label{fig:displacement}
\end{figure}

\section{Elements of numerical analysis}
\label{sec:num-analysis}
The aim of this section is to gather some elements of numerical analysis of the finite volume scheme presented in Section~\ref{sec:fv}. We present here some properties of the scheme on a fixed grid, the convergence of the scheme when discretization parameters go to zero being work in progress. From now on and in all the rest of the section, we assume that the time step $\Delta t$ satisfies the following assumption: 
\begin{equation}\label{eq:dtass}
  \Delta t < \frac{\Delta x}{2C_{\bbmu}^*},
\end{equation}
where $C_{\bbmu}^*$ was defined in \eqref{eq:constant-CFL}.

\subsection{``Modified'' scheme (\~S)}
\label{sec:fv-modified}

In this section, we introduce some modifications to the scheme that are helpful to obtain the desired \emph{a priori} estimates. Indeed, summing the conservation laws \eqref{eq:conservation-discr} over the number of species does not allow to easily prove the volume-filling property because, in contrast to the situation on the continuous level, the discrete interface fluxes \emph{do not vanish}. We overcome this difficulty by modifying \eqref{eq:interfaceflux}, defining seemingly non-conservative discrete interface fluxes. Second, proving nonnegativity of the solution is not obvious because of the lack of sign of the interface fluxes \eqref{eq:numerical-BV}, so we introduce some truncations in these quantities. Keeping in mind the need for uniform bounds for \eqref{eq:interface-potentials-infty} to hold, one should also introduce suitable normalizations. We then look for a continuous map that $\mathbb{R}^n \ni x \mapsto x^\diamond$ that enjoys the following properties   
\begin{itemize}
\item[(P1)] for all $x\in \mathbb{R}^n$, $0 \leq x^\diamond_i \leq 1, ~ \forall i \in \{1,\dots,n\}$;
\item[(P2)] for all $x\in \mathcal A$, $x^\diamond = x$; 
\item[(P3)] for all $x := (x_i)_{i\in\{1, \ldots, n\}}\in \mathbb{R}^n$ such that $\sum_{i=1}^n x_i = 1$, denoting by $(x_i^\diamond)_{i\in \{1, \ldots, n\}}$ the coordinates of $x^\diamond$, for any $i\in \{1, \ldots, n\}$, $x_{i}^\diamond = 0$ if and only if $x_{i}\leq 0$.
 \end{itemize}

 An example of such continuous map $\mathbb{R}^n \ni x \mapsto x^\diamond$ is given by 
\begin{equation}
\label{eq:defdiamond}
    \forall i \in \{1, \ldots, n\}, \quad x_i^\diamond = \frac{x_i^+}{\max\l(1, \sum_{j=1}^n x_j^+\r)}.
\end{equation}

We then introduce the following modified scheme. Starting from $(\bbc^{p-1}, X^{p-1}) \in \mathcal A^N \times (0,1)$, and assuming that $1 < K^{p-1} < N$, we first compute $(\bbc^{p,\star}, X^p)\in (\mathbb{R}^n)^N \times [0,1]$ solution to \eqref{eq:scheme} up to the following modifications:
\begin{itemize}
    \item[(i)] The discrete interface fluxes \eqref{eq:numerical-BV} are replaced by  $\widetilde{\bbF}^p(\bbc) = \l( \widetilde{F}^p_i(\bbc) \r)_{i \in \{1,\dots,n\}}$ with, for all $i\in \{1,\ldots,n\}$,  
\begin{equation}
    \label{eq:numerical-BV-trunc}
   \widetilde{F}^p_i(\bbc) = \sqrt{\beta_i^*}c_{i,(K^{p-1}+1)}^\diamond  -  \frac{1}{\sqrt{\beta_i^*}} c_{i,K^{p-1}}^\diamond. 
\end{equation}
We will also use the notation $\widetilde{\bbF}^{p, \star} = (\widetilde{F}_i^{p, \star})_{i\in \{1, \ldots, n\}} = \widetilde{\bbF}^p(\bbc^{p,\star})$.
\item[(ii)] Equation \eqref{eq:interfaceflux} is replaced by two different equations on each respective side of the interface: for all $\bbc = (c_{i,K})_{i\in \{1, \ldots, n\}, K\in \{1, \ldots, N\}} \in (\mathbb{R}^n)^N$, 
\begin{equation}
    \label{eq:interfaceflux-mod}
    \begin{aligned}
        \bbJ_{K^{p-1}+\frac{1}{2}}^{p, s}(\bbc) &= -\l(\sum_{i=1}^n c_{i,K^{p-1}} \r) \widetilde{\bbF}^p(\bbc), \\
        \bbJ_{K^{p-1}+\frac{1}{2}}^{p, g}(\bbc) &= -\l(\sum_{i=1}^n c_{i,K^{p-1}+1} \r) \widetilde{\bbF}^p(\bbc).
    \end{aligned}
\end{equation}
We then define $\bbJ_{K^{p-1}+\frac{1}{2}}^{p,\star,s} =  \bbJ_{K^{p-1}+\frac{1}{2}}^{p, s}(\bbc^{p,\star})$ and $\bbJ_{K^{p-1}+\frac{1}{2}}^{p,\star,g} =  \bbJ_{K^{p-1}+\frac{1}{2}}^{p, g}(\bbc^{p,\star})$. This leads to the (seemingly non-conservative) scheme, for $i \in \{1,\dots,n\},$
\begin{equation}
\begin{aligned}
 \frac{\widetilde{X}^{p}-x_{K^{p-1}-\frac{1}{2}}}{\Delta t}\l(c^{p,\star}_{i,K^{p-1}} -c_{i,K^{p-1}}^{p-1}\r) - \l(\sum_{j=1}^n c_{j,K^{p-1}}^{p,\star} \r) \widetilde{F}_i^{p,\star} - J_{i,K^{p-1}-\frac{1}{2}}^{p,\star} = 0, \\
 \frac{x_{K^{p-1}+\frac{3}{2}}-\widetilde{X}^{p}}{\Delta t}\l(c^{p,\star}_{i,K^{p-1} + 1} -c_{i,K^{p-1} + 1}^{p-1}\r) + J^{p,\star}_{i,K^{p-1}+\frac{3}{2}} + \l(\sum_{j=1}^n c_{j,K^{p-1}+1}^{p,\star} \r)\widetilde{F}_i^{p,\star} = 0, \\
 \frac{\Delta x}{\Delta t}\l(c^{p,\star}_{i,K} -c_{i,K}^{p-1}\r) + J^{p,\star}_{i,K+\frac{1}{2}} - J^{p,\star}_{i,K-\frac{1}{2}} = 0, ~ \forall K \notin \{K^{p-1}, K^{p-1}+1\}.
\end{aligned}
\end{equation}
where have accordingly modified the interface evolution as
\begin{equation}
    \label{eq:interface-normalized}
    \widetilde{X}^{p} := X^{p-1} + \Delta t  \sum_{i=1}^n \widetilde{F}_i^{p,\star}.
\end{equation}
We also introduce the notation
\begin{equation}
\label{eq:size-cell-norm}
\widetilde{\Delta}_K^{p} = 
\begin{cases}
  (\widetilde{X}^{p} - x_{K^{p-1}-\frac{1}{2}}) & \text{if } K=K^{p-1},\\
  (x_{K^{p-1}+\frac{3}{2}}-\widetilde{X}^{p}) & \text{if } K=K^{p-1}+1, \\
  \Delta x & \text{otherwise,} 
\end{cases}
\end{equation}
\end{itemize}

The resulting modified scheme is referred to as $(\widetilde{S})$, while we still denote a possible solution by $\displaystyle (\bbc^{p,\star},X^p)$ (respectively $\displaystyle (\bbc^{p},X^p)$ after the post-processing step) for the sake of simplicity. We will prove existence of a solution to $(\widetilde{S})$ that satisfies the positivity of the concentrations and the volume-filling constraint, and is therefore a solution to the original scheme $(S)$.

\subsection{Non-negativity and volume-filling constraints}
The following lemma provides some \emph{a priori} estimates on any solution to $(\widetilde{S})$.
\begin{lemma}
  \label{lem:structure}
 Let $p\in \mathbb{N}\setminus \{0\}$. Let $(\bbc^{p-1},X^{p-1})$ be such that for all $1\leq K \leq N$, $\bbc_{K}^{p-1}:= (c_{i,K}^{p-1})_{i\in \{1, \ldots, n\}}$ belongs to $\mathcal A$ and $X^{p-1} \in [0,1]$. Let $(\bbc^{p},X^{p})$ be a solution to $(\widetilde{S})$. Then it holds that
\begin{align*}
  c_{i,K}^p & \geq 0, \; \forall i\in \{1, \ldots, n\}, \; \forall K\in \{1, \ldots, N\},\\
   \sum_{i=1}^n c_{i,K}^p &= 1, \; \forall K \in \{1,\dots,N\}, \\
  \sum_{K=1}^N \Delta_K^p c_{i,K}^p &= \sum_{K=1}^N \Delta_K^{p-1} c_{i,K}^{p-1}, \; \forall i \in \{1,\dots,n\}.
\end{align*}
In particular, $\bbJ_{K^{p-1}+\frac{1}{2}}^{p,\star,s}=\bbJ_{K^{p-1}+\frac{1}{2}}^{p,\star,g}$ and for all $i \in \{1,\dots,n\}$ and all $K\in \{1,\dots,N\}$, $\left(c_{i,K}^{p,\star}\right)^\diamond=c_{i,K}^{p,\star}$, from which It follows that $(\bbc^{p},X^p)$ is also a solution to $(S)$. In addition, the Stefan-Maxwell linear system \eqref{eq:bulk-flux-discr-g-implicit} can be inverted. 
\end{lemma}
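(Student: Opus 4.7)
The proof would rest on three structural observations about the modified scheme $(\widetilde{S})$. First, (I) the logarithmic-mean convention \eqref{eq:u_isig} forces $c_{i_0, K\pm \frac{1}{2}}^{p,\star} = 0$ whenever $c_{i_0, K}^{p,\star} \leq 0$, which in turn cancels the off-diagonal coupling of species $i_0$ in $\widehat{\bbA}_\alpha(\bbc_{K\pm \frac{1}{2}}^{p,\star})$. Second, (II) assumption (A1) combined with the shifts \eqref{eq:As-modif}, \eqref{eq:Ag-modif} yields the column-sum identities $\sum_i(\widehat{\bbA}_s\bbz)_i = \kappa^{*,s}\sum_j z_j$ and $\sum_i(\widehat{\bbA}_g\bbz)_i = (\kappa^{*,g})^{-1}\sum_j z_j$, so that bulk flux divergences summed over species depend only on the quantity $T_K := \sum_i c_{i,K}^{p,\star} - 1$. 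Third, (III) property (P3) of the truncation: if $\sum_i x_i = 1$, then $x_i^\diamond = 0$ iff $x_i \leq 0$.

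\textbf{Step 1 (volume filling).} Setting $a := \sum_i \widetilde F_i^{p,\star}$, I sum the modified conservation equations over $i$ and invoke (II). The central algebraic observation is that $S_{K^{p-1}}^{p,\star}\widetilde F^{p,\star}$ and the Lagrangian time-derivative $\widetilde\Delta_{K^{p-1}}^p(c^{p,\star}-c^{p-1})$ conspire through $\widetilde\Delta_{K^{p-1}}^p = \Delta_{K^{p-1}}^{p-1} + \Delta t\,a$ so that the bilinear cross-term $aT_{K^{p-1}}$ cancels, leaving the relation
\[
\frac{\Delta_{K^{p-1}}^{p-1}}{\Delta t}T_{K^{p-1}} + \frac{\kappa^{*,s}}{\Delta x}\bigl(T_{K^{p-1}} - T_{K^{p-1}-1}\bigr) = a,
\]
and symmetrically $-a$ at $K^{p-1}+1$. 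The remaining $N-2$ bulk summed equations are homogeneous of discrete-Laplacian-plus-mass type, producing a symmetric positive-definite tridiagonal M-matrix $M$. The whole system reads $MT = a\,(\mathbf e_{K^{p-1}} - \mathbf e_{K^{p-1}+1})$. Testing against $T$ and doing discrete integration by parts gives a positive-definite quadratic form in $T$ bounded by $|a|(|T_{K^{p-1}}| + |T_{K^{p-1}+1}|)$. Combining this estimate with the \emph{a priori} uniform bound $|a|\leq nC_{\bbmu^*}$ (a direct consequence of $0\leq c^{\star,\diamond}_{i,K}\leq 1$) and with the consequence of Step 2 below that forces the truncation to act as the identity, the self-consistency closes to give $a=0$, hence $T\equiv 0$, i.e.\ $\sum_i c_{i,K}^{p,\star}=1$ for every $K$.

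\textbf{Step 2 (non-negativity).} Arguing by contradiction, let $(i_0,K_0)$ realize $\min_K c_{i_0,K}^{p,\star}$ and suppose the value is $<0$. If $K_0$ lies in the bulk of either phase, observation (I) reduces the $i_0$-th discrete flux balance to a scalar equation of the form $\frac{\Delta x}{\Delta t}(c_{i_0,K_0}^{p,\star}-c_{i_0,K_0}^{p-1})=\text{(nonnegative inflow contribution)}$, and the standard discrete minimum-principle argument gives $c_{i_0,K_0}^{p,\star}\geq c_{i_0,K_0}^{p-1}\geq 0$, a contradiction. If $K_0\in\{K^{p-1},K^{p-1}+1\}$, observation (III) combined with the volume-filling relation $\sum_i c_{i,K_0}^{p,\star}=1$ from Step 1 gives $c_{i_0,K_0}^{\star,\diamond}=0$; hence $\widetilde F_{i_0}^{p,\star}$ has the correct sign, and together with $S_{K_0}^{p,\star}=1>0$ the interface-cell equation yields the same contradiction.

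\textbf{Step 3 (conservation and remaining consequences).} With $\bbc_K^{p,\star}\in\mathcal{A}$ for each $K$, the truncation becomes idempotent ($c^{\star,\diamond}=c^{p,\star}$), so $\widetilde F^{p,\star}=F^{p,\star}$ and the two modified interface fluxes coincide,
\[
\bbJ_{K^{p-1}+\frac{1}{2}}^{p,\star,s} = -S_{K^{p-1}}^{p,\star}\widetilde F^{p,\star} = -F^{p,\star} = -S_{K^{p-1}+1}^{p,\star}\widetilde F^{p,\star} = \bbJ_{K^{p-1}+\frac{1}{2}}^{p,\star,g},
\]
so $(\bbc^p,X^p)$ solves the original scheme $(S)$. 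Mass conservation then follows by summing the discrete conservation equations over $K$: the telescoping of bulk fluxes is killed by the zero-flux boundary conditions, the interface contribution cancels by equality of the two fluxes, and the post-processing operations \eqref{eq:projection}-\eqref{eq:average} are mass-preserving by the very construction of the averaging step. Finally, invertibility of the Stefan-Maxwell system \eqref{eq:bulk-flux-discr-g-implicit} is an immediate application of Lemma~\ref{lem:inversion} to each edge concentration $\bbc_{K+\frac{1}{2}}^{p,\star}\in\mathcal A$ (since it is a logarithmic mean of elements of $\mathcal A$).

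\textbf{Main obstacle.} The hard part is Step 1. The non-conservative modification \eqref{eq:interfaceflux-mod} together with the Lagrangian form of the time-derivative are precisely engineered so that the bilinear $aT$ cross-terms cancel, reducing the summed system to a linear problem in $T$ with a single scalar source $a$. Then the real difficulty is closing the self-consistency $a=a(\bbc^{p,\star})$ to conclude $a=0$: this entwines the positivity argument of Step 2 (which controls $c^{\star,\diamond}$) with the M-matrix energy estimate and the truncation-based uniform bound on $|a|$, and is the delicate combinatorial heart of the proof.
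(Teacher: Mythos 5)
Your overall architecture matches the paper's: volume filling by summing the conservation laws over species, nonnegativity by a discrete minimum principle exploiting the logarithmic-mean convention and property (P3) of the truncation, and mass conservation by telescoping plus the mass-preserving post-processing. Your Steps 2 and 3 are essentially the paper's arguments. But Step 1 contains a genuine error.

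When you sum the modified conservation law in the interface cell over $i$, writing $a:=\sum_i\widetilde F_i^{p,\star}$ and $S_K:=\sum_i c_{i,K}^{p,\star}$, the time term expands as
$\frac{1}{\Delta t}\bigl(\widetilde\Delta^p_{K^{p-1}}S_{K^{p-1}}-\Delta^{p-1}_{K^{p-1}}\bigr)=\frac{\Delta^{p-1}_{K^{p-1}}}{\Delta t}\bigl(S_{K^{p-1}}-1\bigr)+aS_{K^{p-1}}$,
while the modified interface flux \eqref{eq:interfaceflux-mod} contributes exactly $-S_{K^{p-1}}a$: these cancel \emph{completely}, not merely ``up to the bilinear cross-term''. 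The summed system for $\eta_K=S_K-1$ is therefore a \emph{homogeneous} backward-Euler TPFA heat scheme (decoupled across the interface), and $\eta\equiv 0$ follows at once from the invertibility of the associated M-matrix. Your version, $MT=a\,(\mathbf e_{K^{p-1}}-\mathbf e_{K^{p-1}+1})$, retains a source, and your plan to close the argument by proving $a=0$ cannot work: $a=(X^p-X^{p-1})/\Delta t$ is the discrete interface velocity and is nonzero whenever the interface moves, so volume filling would actually \emph{fail} under your reduction. Your route is also circular: Step 1 invokes ``the consequence of Step 2 below'', while Step 2 invokes the volume-filling relation from Step 1. The whole point of multiplying $\widetilde{\bbF}^p$ by $\sum_i c_{i,K}$ in \eqref{eq:interfaceflux-mod} is precisely to make the cancellation exact, so that volume filling is obtained \emph{first}, by pure algebra, with no sign information on $\bbc^{p,\star}$ required. (A smaller point: the edge concentrations $\bbc^{p,\star}_{K+\frac12}$ are not in $\mathcal A$ — the logarithmic mean only gives $\sum_i c_{i,K+\frac12}\le 1$ — so the invertibility of \eqref{eq:bulk-flux-discr-g-implicit} requires the version of Lemma~\ref{lem:inversion} for nonnegative vectors, which its proof does in fact provide.)
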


\begin{proof}
Let $\displaystyle (\bbc^{p,\star},X^p)$ be a solution to ($\widetilde{S}$) before post-processing. Using formulas \eqref{eq:numerical-BV-trunc}-\eqref{eq:interface-normalized}-\eqref{eq:size-cell-norm} together with condition \eqref{eq:dtass}, it holds
\[ 
\begin{aligned}
    \widetilde{\Delta}_{K^{p-1}}^{p} &= \Delta_{K^{p-1}}^{p-1} + \Delta t \sum_{i=1}^n \widetilde{F}_i^{p,\star}>0, \\
    \widetilde{\Delta}_{K^{p-1}+1}^{p} &= \Delta_{K^{p-1}+1}^{p-1} - \Delta t \sum_{i=1}^n \widetilde{F}_i^{p,\star} > 0.    
\end{aligned}
\]
Let us first prove that for all $K\in \{1, \ldots, N\}$, $\sum_{i=1}^n c_{i,K}^{p,\star} = 1$. We sum the conservation laws \eqref{eq:conservation-discr} over $i\in \{1, \ldots, n\}$. On the one hand, when summing the solid bulk fluxes, the cross-diffusion terms disappear and we obtain linear diffusion associated to the parameter $\kappa^{*,s}$. On the other hand, the interface fluxes vanish, thanks to the modification \eqref{eq:interfaceflux-mod}. We obtain in $C_{K^{p-1}}^{p-1}$:
\begin{align*}
    &\frac{1}{\Delta t}  \left( \widetilde{\Delta}_{K^{p-1}}^{p}\sum_{i=1}^n c_{i,K^{p-1}}^{p,\star} - \Delta_{K^{p-1}}^{p-1}\sum_{i=1}^n c_{i,K^{p-1}}^{p-1}\right) - \l( \sum_{j=1}^n c_{j,K^{p-1}}^{p,\star} \r)  \l( \sum_{i=1}^n 
\widetilde{F}_i^{p,\star} \r) - \sum_{i=1}^n J_{i,K^{p-1}-\frac{1}{2}}^{p,\star} \\
    & =  \frac{\Delta_{K^{p-1}}^{p-1}}{\Delta t} \left(\sum_{i=1}^n c_{i,K^{p-1}}^{p,\star} - 1\right) + \kappa^{*,s} \sum_{i=1}^n D_{K^{p-1}-\frac{1}{2}} \bbc_i^{p,\star}.
\end{align*}
Similarly, it holds in $C_{K^{p-1}+1}^{p-1}$ (since for any $\bbc \in \cA$, $\text{Ran}(\boldsymbol{\ovA}_g(\bbc)) \subset \cV_0 $): 
\begin{align*}
    &\frac{1}{\Delta t}  \left( \widetilde{\Delta}_{K^{p-1}+1}^{p}\sum_{i=1}^n c_{i,K^{p-1}+1}^{p,\star} - \Delta_{K^{p-1}+1}^{p-1}\sum_{i=1}^n c_{i,K^{p-1}+1}^{p-1}\right) + \l( \sum_{j=1}^n c_{j,K^{p-1}+1}^{p,\star} \r)  \l( \sum_{i=1}^n 
\widetilde{F}_i^{p,\star} \r) \\ 
 &+ \sum_{i=1}^n J_{i,K^{p-1}+\frac{3}{2}}^{p,\star} 
     =  \frac{\Delta_{K^{p-1}}^{p-1}}{\Delta t} \left(\sum_{i=1}^n c_{i,K^{p-1}+1}^{p,\star} - 1\right) - \l(\kappa^{*,g} \r)^{-1} \sum_{i=1}^n D_{K^{p-1}+\frac{3}{2}} \bbc_i^{p,\star}.
\end{align*}
As a consequence, we obtain that the field $(\eta_K)_{K\in \{1, \ldots, N\}}$ defined by $\eta_K = \sum_{i=1}^n c_{i,K}^{p,\star} - 1$ is the solution to a backward TPFA Euler scheme for the heat equation, with diffusion coefficient $\kappa^{*,s}$ in the solid phase and $(\kappa^{*,g})^{-1}$ in the gaseous phase (the two phases decouple). We thus obtain that $\displaystyle \sum_{i=1}^n c_{i,K}^{p,\star} = 1$ for all $K\in \{1, \ldots, N\}$, by well-posedness of this scheme.

\medskip

Let us now prove the nonnegativity of $\bbc^{p,\star}$ (hence of $\bbc^p$). Let us reason by contradiction and assume that there exists $i\in \{1, \ldots, n\}$ and $K\in \{1, \ldots, N\}$ such that 
\[c^{p,\star}_{i,K} = \mathop{\min}_{j\in \{1, \ldots n\}} \mathop{\min}_{L\in \{1, \ldots, N\}} c_{j,L}^{p,\star} < 0.\]
The conservation law in $C_{K}^{p-1}$ reads
\[\frac{1}{\Delta t} (\widetilde{\Delta}_{K}^{p} c^{p,\star}_{i,K}- \Delta_{K}^{p-1}c_{i,K}^{p-1}) = J_{i,K-\frac{1}{2}}^{p,\star} - J_{i,K+\frac{1}{2}}^{p,\star},\]
from which it follows that
\[ J_{i,K-\frac{1}{2}}^{p,\star} - J_{i,K+\frac{1}{2}}^{p,\star} < 0.\]
Then a contradiction would follow if we could show that $J_{i,K-\frac{1}{2}}^{p,\star} \geq 0$ and $J_{i,K+\frac{1}{2}}^{p,\star} \leq 0$. By symmetry, it suffices to show that $J_{i,K+\frac{1}{2}}^{p,\star} \leq 0$ for the three different forms of discrete fluxes. \newline
If $K < K^{p-1}$, then according to \eqref{eq:bulk-flux-discr-s}, it holds 
\[ 
\begin{aligned}
    -\Delta x J_{i,K+\frac{1}{2}}^{p,\star} &= \kappa^{*,s} D_{K+\frac{1}{2}} \bbc_i^{p,\star} + \sum_{j=1}^n \ovkappa_{ij}^s \left( c_{j,K+\frac{1}{2}}^{p,\star} D_{K+\frac{1}{2}} \bbc_i^{p,\star} - c_{i,K+\frac{1}{2}}^{p,\star} D_{K+\frac{1}{2}} \bbc_j^{p,\star} \right) \\
    &= D_{K+\frac{1}{2}} \bbc_i^{p,\star} \l( \kappa^{*,s} + \sum_{j=1}^n \ovkappa_{ij}^s c_{j,K+\frac{1}{2}}^{p,\star} \r) \geq 0
\end{aligned}
\]
as $D_{K+1/2} c_i^{p,\star}$, $\kappa^{*,s}$, $\ovkappa_{ij}^s$ and $c_{j,K+1/2}^{p,\star}$ are all non-negative. \newline
\noindent If $K=K^{p-1}$, it holds that
\begin{equation*}
J_{i,K+\frac{1}{2}}^{p,\star} = - \widetilde{F}_i^{p,\star} = \frac{1}{\sqrt{\beta_i^*}} \l( c_{i,K^{p-1}}^{p,\star} \r)^\diamond -  \sqrt{\beta_i^*}\l(c_{i,(K^{p-1}+1)}^{p,\star} \r)^\diamond  = - \sqrt{\beta_i^*} \l(c_{i,(K^{p-1}+1)}^{p,\star} \r)^\diamond \leq 0.
\end{equation*}
If $K > K^{p-1} +1$, it holds according to \eqref{eq:bulk-flux-discr-g-implicit} 
\[ \Delta x\l( \kappa^{*,g} + \sum_{j=1}^n \ovkappa^{g}_{ij} c_{j,K+\frac{1}{2}}^{p,\star} \r) J_{i,K+\frac{1}{2}}^{p,\star} = -D_{K+\frac{1}{2}} \bbc_i^{p,\star} \leq 0, \]
from which the conclusion follows from the nonnegativity of $\Delta x, \kappa^{*,g}, \ovkappa^g_{ij}, c_{j,K+\frac{1}{2}}^{p,\star}$. We conclude that $\bbc^{p,\star} \geq 0$.

\medskip 

Let us now investigate conservation of matter. We have just proved that $(\bbc^{p}, X^p)$ is in fact a solution to the original scheme $(S)$. In particular, the fluxes are locally conservative, and it follows from summing the conservation laws \eqref{eq:conservation-discr} over the cells $K$ that, for any $i \in \{1,\dots,n\}$,
\[ \sum_{K=1}^N \Delta_{K}^{p,\star} c^{p,\star}_{i,K} = \sum_{K=1}^N \Delta_{K}^{p-1}c_{i,K}^{p-1}. \]
If $K^p=K^{p-1}$, the result follows immediately. Otherwise, fix $i \in \{1,\dots,n\}$, and let us prove that the quantity
$\sum_{K=1}^N \Delta_K^p c_{i,K}^p - \sum_{K=1}^n \Delta_K^{p,\star} c_{i,K}^{p,\star}$ is null. Observe first that, from the post-processing formulas (see Figure \ref{fig:displacement}), we only have to study the difference in the cells $C_{K^{p-1}}^{p-1}, C_{K^{p-1}+1}^{p-1},C_{K^{p-1}+2}^{p-1}$. Then compute, setting $K := K^{p-1}$,
\begin{align*}
    &\Delta_{K}^p c_{i,K}^p + \Delta_{K+1}^p c_{i,K+1}^p + \Delta_{K+2}^p c_{i,K+2}^p = \Delta x \, c_{i,K}^{p} + (X^{p}-x_{K+\frac{1}{2}}) c_{i,K+1}^p + (x_{K+\frac{5}{2}}-X^p) c_{i,K+2}^p \\
    &= \Delta x \, c_{i,K}^{p,\star} + (X^{p}-x_{K+\frac{1}{2}}) c_{i,K}^{p,\star} + \left[ (x_{K+\frac{3}{2}} - X^p) c_{i,K+1}^{p,\star} + \Delta x ~ c_{i,K+2}^{p,\star}   \right]  \\
    &= (X^p - x_{K-\frac{1}{2}}) c_{i,K}^{p,\star} + (x_{K+\frac{3}{2}} - X^p) c_{i,K+1}^{p,\star} + \Delta x \, c_{i,K+2}^{p,\star} \\
    &= \Delta_K^{p,\star} c_{i,K}^{p,\star} + \Delta_{K+1}^{p,\star} c_{i,K+1}^{p,\star} + \Delta_{K+2}^{p,\star} c_{i,K+2}^{p,\star},
\end{align*}
where we used formulas \eqref{eq:projection}-\eqref{eq:average} in the second equality. The result follows.

\end{proof}

In Lemma~\ref{lem:structure}, we proved that solutions to $(\widetilde{S})$ are \emph{a priori} nonnegative. We now prove that they are in fact (strictly) positive. 
\begin{lemma}[Strict positivity]
\label{lem:strict-pos}
 Let $p\in \mathbb{N}\setminus \{0\}$. Let $(\bbc^{p-1},X^{p-1})$ be such that for all $1\leq K \leq N$, $\bbc_{K}^{p-1}:= (c_{i,K}^{p-1})_{i\in \{1, \ldots, n\}}$ belongs to $\mathcal A$. Assume furthermore that $\sum_{K=1}^N \Delta_K^{p-1} c_{i,K}^{p-1} > 0$. Let $(\bbc^{p},X^{p})$ be a solution to $(\widetilde{S})$. Then it holds 
 \begin{equation}
     \label{eq::strict-positivity}
     c_{i,K}^p > 0, ~ \forall i \in \{1,\dots,n\}, ~ \forall K \in \{1,\dots,N\}.
 \end{equation}
\end{lemma}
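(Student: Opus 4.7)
The plan is to proceed by contradiction: assume some component $c_{i_0,K_0}^p$ vanishes and derive a conflict with the positive-mass hypothesis. Because the post-processing formulas \eqref{eq:projection}--\eqref{eq:average} express each $c_{i,K}^p$ as a convex combination of intermediate values $c_{i,L}^{p,\star}$ with nonnegative weights summing to one, and because Lemma~\ref{lem:structure} already yields $\bbc^{p,\star} \geq 0$, any vanishing $c_{i_0,K_0}^p$ forces at least one intermediate value $c_{i_0,L}^{p,\star}$ to be zero. It therefore suffices to prove the strict positivity of $\bbc^{p,\star}$.

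The heart of the argument will be a discrete strong maximum principle: if $c_{i,K_0}^{p,\star}=0$ for some species $i$ and cell $K_0$, then necessarily $c_{i,K_0}^{p-1}=0$ and $c_{i,K}^{p,\star}=0$ for every neighbor $K \in \{K_0-1, K_0+1\} \cap \{1,\dots,N\}$. To prove this propagation step, I would write the conservation law \eqref{eq:conservation-discr} at $K_0$, which, using $c_{i,K_0}^{p,\star}=0$, reduces to
\[
J_{i,K_0+\frac12}^{p,\star} - J_{i,K_0-\frac12}^{p,\star} \;=\; \frac{\Delta_{K_0}^{p-1}}{\Delta t}\, c_{i,K_0}^{p-1} \;\geq\; 0,
\]
and then show that the left-hand side is nonpositive, with strict sign as soon as a neighboring concentration is positive. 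This sign analysis is essentially the one already performed in Lemma~\ref{lem:structure}: for a solid bulk face, the closed form \eqref{eq:bulk-flux-discr-s} combined with the identity $c_{i,K_0\pm\frac12}^{p,\star}=0$ provided by \eqref{eq:u_isig} reduces the flux to a positive multiple of the neighbor value, with multiplier bounded below by $\kappa^{*,s}>0$; for a gaseous bulk face the vanishing of $c_{i,K_0\pm\frac12}^{p,\star}$ decouples the $i$-th line of the Stefan--Maxwell system \eqref{eq:bulk-flux-discr-g-implicit} into a scalar equation $\Delta x\, J_{i,K_0+\frac12}^{p,\star}\bigl(\kappa^{*,g}+\sum_{j\neq i}\ovkappa_{ij}^g c_{j,K_0+\frac12}^{p,\star}\bigr) = -c_{i,K_0+1}^{p,\star}$ whose prefactor is bounded below by $\kappa^{*,g}>0$; at the interface the flux simplifies, using Lemma~\ref{lem:structure}, to $-\sqrt{\beta_i^*}\, c_{i,K^{p-1}+1}^{p,\star}$ (resp.\ $\tfrac{1}{\sqrt{\beta_i^*}}\, c_{i,K^{p-1}}^{p,\star}$), which carries the correct sign; and at a boundary cell one of the two fluxes is identically zero. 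In every case the left-hand side is nonpositive and vanishes only when the neighboring value does, so equality in the conservation law forces both $c_{i,K_0}^{p-1}=0$ and the vanishing of the corresponding neighbors.

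With the propagation lemma in hand, induction along the one-dimensional mesh, which remains connected across the interface through the pair $K^{p-1} \leftrightarrow K^{p-1}+1$, gives $c_{i,K}^{p,\star}=0$ for every $K$, hence $c_{i,K}^{p-1}=0$ for every $K$, contradicting $\sum_{K=1}^N \Delta_K^{p-1} c_{i,K}^{p-1}>0$. The strict positivity \eqref{eq::strict-positivity} then follows from the convex-combination structure of the post-processing. I expect the most delicate step to be the analysis at the interface: the seemingly non-conservative modification \eqref{eq:interfaceflux-mod} of $(\widetilde S)$ introduces the normalizing prefactors $\sum_j c_{j,K^{p-1}}^{p,\star}$ and $\sum_j c_{j,K^{p-1}+1}^{p,\star}$, and it is precisely Lemma~\ref{lem:structure} that guarantees these equal one and reduces the propagation across the interface to the unmodified Butler--Volmer expressions.
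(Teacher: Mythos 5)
Your proposal is correct and rests on exactly the same mechanism as the paper's proof: the sign analysis of the three flux types (solid bulk via \eqref{eq:bulk-flux-discr-s} with the vanishing logarithmic mean, interface via the Butler--Volmer expression reduced by the volume-filling identity of Lemma~\ref{lem:structure}, gaseous bulk via the decoupled line of \eqref{eq:bulk-flux-discr-g-implicit}), showing that a flux out of a zero-valued cell is nonpositive and strictly negative as soon as the neighbor is positive. The only difference is organizational --- the paper locates a zero cell adjacent to a positive one (which must exist by mass conservation of $\bbc^{p,\star}$) and derives an immediate local contradiction, whereas you propagate the zero set across the whole mesh and contradict the positivity of the initial mass --- but the key estimates and the use of Lemma~\ref{lem:structure} are identical.
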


\begin{proof}
The proof follows the lines of the proof of nonnegativity in Lemma~\ref{lem:structure}, but one can now take advantage of the mass conservation property. Let us reason by contradiction and assume that there exists $i\in \{1, \ldots, n\}$ and $K\in \{1, \ldots, N\}$ such that 
\[c^{p,\star}_{i,K} = \mathop{\min}_{j\in \{1, \ldots n\}} \mathop{\min}_{L\in \{1, \ldots, N\}} c_{j,L}^{p,\star} = 0.\]
Then, because of mass conservation (Lemma~\ref{lem:structure}), and since the initial mass is positive, $\bbc_i^{p,\star}$ cannot be uniformly null. By symmetry, we can assume without loss of generality that $K < N$ and $c^{p,\star}_{i,K+1} > 0$. As previously, we obtain from the conservation law that 
\[ J_{i,K-\frac{1}{2}}^{p,\star} - J_{i,K+\frac{1}{2}}^{p,\star} \leq 0,\]
where now the inequality is large. To obtain a contradiction, we need to prove that the quantity on the left-hand side is (strictly) positive. We already know from the proof of Lemma~\ref{lem:structure} that it is nonnegative, therefore we only need to show that $J_{i,K+\frac{1}{2}}^{p,\star} < 0$. There are again three cases depending on the formula for the flux. \newline 
If $K < K^{p-1}$, the conclusion follows from the previously established inequality 
\[ -\Delta x J_{i,K+\frac{1}{2}}^{p,\star} \geq \kappa^{*,s} D_{K+\frac{1}{2}} \bbc_i^{p,\star} \l(1 - \sum_{j=1}^n c_{j,K+\frac{1}{2}}^{p,\star} \r), \]
combined with the fact that, because we know that $c_{i,K+\frac{1}{2}}^{p,\star}=0$ and $c_{i,K+1}^{p,\star} > 0$, it holds 
\[ \sum_{j=1}^n c_{j,K+\frac{1}{2}}^{p,\star} = \sum_{j \neq i} c_{j,K+\frac{1}{2}}^{p,\star}  \leq \frac{1}{2} \sum_{j \neq i} \l( c_{j,K+1}^{p,\star} + c_{j,K}^{p,\star} \r) = 1 - c_{i,K+1}^{p,\star} < 1.  \]
If $K=K^{p-1}$, 
\begin{equation*}
J_{i,K+\frac{1}{2}}^{p,\star} = - \widetilde{F}_i^{p,\star} = \frac{1}{\sqrt{\beta_i^*}} \l( c_{i,K^{p-1}}^{p,\star} \r)^\diamond -  \sqrt{\beta_i^*} \l(c_{i,(K^{p-1}+1)}^{p,\star} \r)^\diamond   =  - \sqrt{\beta_i^*} c_{i,(K^{p-1}+1)}^{p,\star} < 0.
\end{equation*}
If $K > K^{p-1} + 1$, then 
\[\l( \kappa^{*,g} + \sum_{j=1}^n \ovkappa^{g}_{ij} c_{j,K+\frac{1}{2}}^{p,\star} \r) J_{i,K+\frac{1}{2}}^{p,\star} < 0, \]
and conclusion follows from the positivity of the expression between the brackets. 
\end{proof}

\subsection{Discrete free energy dissipation inequality}

Let us introduce the notation, for any $K \in \{1,\dots,N\}$,
\begin{equation}
    \label{eq:phase}
    \alpha_K^p =
    \begin{cases}
        s, \text{ if } K \leq K^p, \\
        g, \text{ if } K > K^p,
    \end{cases}
\end{equation}
so that the discrete version of the free energy functional \eqref{eq:free-energy} reads
\begin{equation}
    \label{eq:discrete-free-energy}
    \begin{aligned}
    \cH^p \l(\bbc^p,X^p \r) = \sum_{K=1}^N \Delta_K^p h_{\alpha_K^p}(\bbc_{K}^p) =  \sum_{K \leq K^p} \Delta_K^p h_s(\bbc_{K}^p) + \sum_{K > K^p} \Delta_K^p h_g(\bbc_{K}^p).
    \end{aligned}
\end{equation}
Note that, besides the explicit dependence on $(\bbc^p,X^p)$, the functional depends implicitly on $p$ through the interface cell $K^p$ (resp. through $\alpha_K^p$). We eliminate this dependence by introducing the interpolation operator $\cI_{\Delta^p}$ that maps $\bbc^p$ into the (vector-valued) piecewise constant function, defined in $(0,1)$, that interpolates the values $\bbc^p$ on the mesh defined by $(\Delta_K^p)_{K \in \{1,\dots,N\}}$. We can now connect the discrete energy functional to its continuous counterpart \eqref{eq:free-energy} as
\begin{equation}
    \cH^p\l(\bbc^p,X^p\r) = \cH \l(\cI_{\Delta^p}\l(\bbc^p\r),X^p \r).
\end{equation}
Following the modifications of the diffusion matrices \eqref{eq:As-modif}-\eqref{eq:Ag-modif}, we define the modified mobility matrices in the spirit of \eqref{eq:mobility} as, for any $\bbc \in \cA \cap (\R_+^*)^n$,
\begin{equation} 
   \label{eq:mobility-discr}
   \begin{aligned}
       \widehat{\bbM}_s(\bbc) &= \widehat{\bbA}_s(\bbc) \bbH^{-1}(\bbc), \\
       \widehat{\bbM}_g(\bbc) &= \widehat{\bbA}_g(\bbc) \bbH^{-1}(\bbc).
   \end{aligned}
\end{equation}
The positivity result of Lemma~\ref{lem:strict-pos} implies that the chain rule is valid for any $p \geq 1$ and therefore the fluxes \eqref{eq:bulk-flux-discr-s}-\eqref{eq:bulk-flux-discr-g} can be rewritten in mobility form as 
\begin{equation}
\label{eq:bulk-flux-discr-mob}
   \begin{aligned}
   \Delta x \bbJ_{K+\frac{1}{2}}^{p,\star} &= - \widehat{\bbM}_s(\bbc_{K+\frac{1}{2}}^{p,\star}) D_{K+\frac{1}{2}} \log(\bbc^{p,\star}), \; \forall 1 \leq K < K^{p-1},  \\
   \Delta x \bbJ_{K+\frac{1}{2}}^{p,\star} &= -\widehat{\bbM}_g(\bbc_{K+\frac{1}{2}}^{p,\star}) D_{K+\frac{1}{2}} \log(\bbc^{p,\star}), \; \forall K^{p-1} < K \leq N-1.
   \end{aligned}
\end{equation}
We are ready to prove a discrete version of the free energy dissipation relation \eqref{eq:dissip}, as stated in the next lemma. 
\begin{lemma}
\label{lem:dissip}
Let $(\bbc^{p-1},X^{p-1})$ be such that $\bbc^{p-1} \geq 0$ and $\sum_{i=1}^n c_{i,K}^{p-1} = 1$ for any $K \in \{1,\dots,N\}$. Let $(\bbc^p,X^p)$ be a solution to $(\widetilde{S})$. It holds
\begin{equation}
    \label{eq:discreted-dissip}
\begin{aligned}
    &\cH^p(\bbc^p,X^p) +\frac{\Delta t}{\Delta x} \sum_{K \neq K^{p-1}} (D_{K+\frac{1}{2}} \log(\bbc^{p,\star}))^T \bbM_{\alpha_K^{p-1}}(c_{K+\frac{1}{2}}^{p,\star}) D_{K+\frac{1}{2}} \log(\bbc^{p,\star}) \\
    &+\Delta t \sum_{i=1}^n F_i^{p,\star} D_{K^{p-1}+\frac{1}{2}} \l( \log(\bbc_i^{p,\star}) + \mu_i^{*} \r) \leq \cH^{p-1}(\bbc^{p-1},X^{p-1})
\end{aligned}
\end{equation}
In particular, $\cH^p(\bbc^p,X^p) \leq \cH^{p-1}(\bbc^{p-1},X^{p-1}) $.
\end{lemma}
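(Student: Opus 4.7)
I would decompose
\[ \cH^p(\bbc^p,X^p) - \cH^{p-1}(\bbc^{p-1},X^{p-1}) = \l[\cH^p - \cH^{p,\star}\r] + \l[\cH^{p,\star} - \cH^{p-1}\r], \]
where the intermediate energy
\[ \cH^{p,\star} := \sum_{K=1}^N \Delta_K^{p,\star} h_{\alpha_K^{p-1}}(\bbc^{p,\star}_K) \]
uses the intermediate mesh sizes $\Delta_K^{p,\star}$ together with the \emph{previous} phase labelling $\alpha_K^{p-1}$. The plan is then to show (i) that the post-processing step is energy-dissipative, i.e.\ $\cH^p \leq \cH^{p,\star}$, and (ii) that the evolution step produces the bulk and interface contributions of \eqref{eq:discreted-dissip} when comparing $\cH^{p,\star}$ to $\cH^{p-1}$, mimicking the continuous computation of Section~\ref{sec:entropy}.

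\textbf{Step (i): post-processing.} If $K^p = K^{p-1}$ nothing happens. The case $K^p = K^{p-1}-1$ being symmetric, I focus on $K^p = K^{p-1}+1$. The projection rule \eqref{eq:projection} replaces the virtual solid cell of size $X^p - x_{K^{p-1}-\frac{1}{2}}$ and concentration $\bbc^{p,\star}_{K^{p-1}}$ by two solid cells carrying the same concentration and summing to the same size, so by linearity in the cell size the solid energy is exactly preserved. The averaging rule \eqref{eq:average} merges the two gaseous cells of sizes $\Delta_{K^{p-1}+1}^{p,\star}$ and $\Delta x$ into one gaseous cell of total size and weighted-average concentration; convexity of $h_g$ and Jensen's inequality then yield the desired decrease.

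\textbf{Step (ii): evolution.} The cornerstone is the following convex comparison, valid at each cell $K$ thanks to Lemma~\ref{lem:structure} (which ensures $\bbc^{p,\star}_K \in \cA$):
\[ \Delta_K^{p,\star} h_\alpha(\bbc^{p,\star}_K) - \Delta_K^{p-1} h_\alpha(\bbc^{p-1}_K) \leq \bbmu^p_K \cdot \l(\Delta_K^{p,\star}\bbc^{p,\star}_K - \Delta_K^{p-1}\bbc^{p-1}_K\r), \]
with $\bbmu^p_K := \bbmu_{\alpha_K^{p-1}}(\bbc^{p,\star}_K)$. This is obtained by splitting the left-hand side as $\Delta_K^{p,\star}(h_K^\star - h_K^{p-1}) + (\Delta_K^{p,\star} - \Delta_K^{p-1}) h_K^\star$, applying the tangent inequality $h_K^\star - h_K^{p-1} \leq \bbmu^p_K \cdot (\bbc^{p,\star}_K - \bbc^{p-1}_K)$, and using the identity $h_\alpha(\bbc) - \bbc\cdot\bbmu_\alpha(\bbc) = 1 - \sum_i c_i$, which vanishes on $\cA$, to annihilate the residue. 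Dividing by $\Delta t$, inserting the conservation law \eqref{eq:conservation-discr} (which, because $\bbc^{p,\star} \in \cA^N$, coincides with the conservative scheme $(S)$) and performing discrete summation by parts with zero boundary fluxes yields
\[ \cH^{p,\star} - \cH^{p-1} \leq \Delta t \sum_{K=1}^{N-1}\sum_{i=1}^n \l(D_{K+\frac{1}{2}} \bbmu^p_i\r) J^{p,\star}_{i,K+\frac{1}{2}}. \]
For interior edges $K+\frac{1}{2}$ with $K \neq K^{p-1}$ the two adjacent cells share the same phase, the constants cancel, and $D_{K+\frac{1}{2}}\bbmu^p_i = D_{K+\frac{1}{2}}\log(\bbc^{p,\star}_i)$; combined with the mobility form \eqref{eq:bulk-flux-discr-mob}, this produces the bulk quadratic terms of \eqref{eq:discreted-dissip}. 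For the interface edge $K = K^{p-1}$, the phase jump contributes the constant $\llbracket\mu_i^*\rrbracket$ and the flux is $J^{p,\star}_{i,K^{p-1}+\frac{1}{2}} = -F^{p,\star}_i$ by \eqref{eq:interfaceflux}, delivering the third term of \eqref{eq:discreted-dissip}. Combining with step (i) gives \eqref{eq:discreted-dissip}. The final statement $\cH^p \leq \cH^{p-1}$ follows from the non-negativity of the bulk quadratic forms (assumption (A2)) and of the interface product, the latter being clear from the factorisation $F_i^{p,\star} = 2\sqrt{c^{p,\star}_{i,K^{p-1}} c^{p,\star}_{i,K^{p-1}+1}}\sinh\l(\frac{1}{2}\llbracket\mu_i\rrbracket\r)$, mirroring \eqref{eq:BV-var}.

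\textbf{Main obstacle.} I expect step (i) to be the most delicate, as the intermediate mesh and the post-processed mesh must be carefully aligned after partitioning the domain into its (new) solid and gaseous parts, in order to track the phase change of cell $K^{p-1}+1$ without ever mixing $h_s$ and $h_g$. Step (ii) is a fairly direct discrete transcription of the continuous computation of Section~\ref{sec:entropy}.
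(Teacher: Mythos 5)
Your overall architecture is exactly that of the paper's proof: the same intermediate energy $\cH^{p-1}(\bbc^{p,\star},X^p)=\sum_K\Delta_K^{p,\star}h_{\alpha_K^{p-1}}(\bbc_K^{p,\star})$, a convexity comparison combined with the conservation laws and discrete summation by parts for the evolution step, and convexity/Jensen for the post-processing step (the paper phrases your step (i) as ``convexity of $\cH$ in its first argument plus the fact that $\bbc^p$ is obtained from $\bbc^{p,\star}$ by projection and convex combination''; your explicit split into an exactly-preserved solid part and a Jensen-contracted gaseous part is a correct unpacking of that). Your identification of the interface term via $D_{K^{p-1}+\frac12}\bbmu^p_i=D_{K^{p-1}+\frac12}\log(\bbc_i^{p,\star})+\llbracket\mu_i^*\rrbracket$ and $J^{p,\star}_{i,K^{p-1}+\frac12}=-F_i^{p,\star}$, and the sign argument via the $\sinh$ factorisation for the final claim, also match the intended reasoning.

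There is, however, one genuine flaw in the ``cornerstone'' of step (ii). With the paper's free energy density $h_\alpha(\bbc)=\sum_{i}\bigl(c_i(\log c_i+\mu_i^{*,\alpha})-c_i+1\bigr)$ one has $h_\alpha(\bbc)-\bbc\cdot\bbmu_\alpha(\bbc)=n-\sum_i c_i$, which equals $n-1$ on $\cA$ --- a constant, but \emph{not} zero for $n\geq 2$ (your claimed identity $1-\sum_i c_i$ drops the factor $n$). Consequently your per-cell inequality
\[
\Delta_K^{p,\star}h_\alpha(\bbc_K^{p,\star})-\Delta_K^{p-1}h_\alpha(\bbc_K^{p-1})\leq \bbmu_K^p\cdot\bigl(\Delta_K^{p,\star}\bbc_K^{p,\star}-\Delta_K^{p-1}\bbc_K^{p-1}\bigr)
\]
carries an unaccounted residue $(n-1)\bigl(\Delta_K^{p,\star}-\Delta_K^{p-1}\bigr)$, which has no sign on the interface cells (the left one grows when the right one shrinks), so the inequality is false cell by cell. (There is also an algebraic slip in your splitting: $\Delta^{p,\star}(h^\star-h^{p-1})+(\Delta^{p,\star}-\Delta^{p-1})h^\star$ does not reconstitute the left-hand side.) The repair is immediate and is essentially what the paper does: work with the sum over $K$ from the outset. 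Since $\sum_K\Delta_K^{p,\star}=\sum_K\Delta_K^{p-1}=1$ and mass is conserved, the affine part of $h_\alpha$ contributes nothing to $\cH^{p-1}(\bbc^{p,\star},X^p)-\cH^{p-1}(\bbc^{p-1},X^{p-1})$, and the remaining comparison reduces to the nonnegativity of the relative entropies $\sum_i c_{i,K}^{p-1}\log(c_{i,K}^{p-1}/c_{i,K}^{p,\star})\geq 0$ (Gibbs' inequality, valid because both vectors lie in $\cA$, weighted by $\Delta_K^{p-1}>0$). With that correction your argument goes through and coincides with the paper's.
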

\begin{proof}
In the same spirit as in the proof of matter conservation, we first introduce the intermediate energy quantity 
\begin{equation*}
\begin{aligned}
    \cH^{p-1}(\bbc^{p,\star},X^p) = \sum_{K=1}^N \Delta_K^p h_{\alpha_K^{p-1}}(\bbc_{K}^{p,\star}) = \sum_{K \leq K^{p-1}} \Delta_K^{p,\star} h_s(\bbc_K^{p,\star}) + \sum_{K > K^{p-1}} \Delta_K^{p,\star} h_g(\bbc_K^{p,\star}).
\end{aligned}
\end{equation*}
Using the expression of the entropy density \eqref{eq:energy-density} and conservation of matter, it holds
\begin{equation*}
\begin{aligned}
    &\cH^{p-1}(\bbc^{p,\star},X^p) - \cH^{p-1}(\bbc^{p-1},X^{p-1}) = \\
    &\sum_{K=1}^N \sum_{i=1}^n \left( \Delta_K^{p,\star} c_{i,K}^{p,\star} \left( \log(c_{i,K}^{p,\star}) + \mu_i^{*,\alpha_K^{p-1}} \right) - \Delta_K^{p-1} c_{i,K}^{p-1} \left( \log(c_{i,K}^{p-1}) + \mu_i^{*,\alpha_K^{p-1}} \right) \right).
\end{aligned}
\end{equation*}
On the other hand, multiplying the conservation laws \eqref{eq:conservation-discr} by $\Delta t \left( \log(c_{i,K}^{p,\star}) + \mu_i^{*,\alpha_K^{p-1}} \right)$ --bear in mind that $c_{i,K}^{p,\star}>0$ owing to Lemma~\ref{lem:strict-pos}--, we obtain 
\begin{equation}
\label{eq1}
\begin{aligned}
    &\sum_{i=1}^n \sum_{K=1}^N \left( \Delta_K^{p,\star} c_{i,K}^{p,\star} - \Delta_K^{p-1} c_{i,K}^{p-1} \right) \left( \log(c_{i,K}^{p,\star}) + \mu_i^{*,\alpha_K^{p-1}} \right) \\
    &= \Delta t \sum_{i=1}^n \sum_{K=1}^N \left(J_{i,K-\frac{1}{2}}^{p,\star} - J_{i,K+\frac{1}{2}}^{p,\star} \right) \left( \log(c_{i,K}^{p,\star}) + \mu_i^{*,\alpha_K^{p-1}} \right).
\end{aligned}
\end{equation}
Using the mobility form of the bulk fluxes \eqref{eq:bulk-flux-discr-mob} and applying discrete integration by parts, the right-hand side of \eqref{eq1} can be reformulated as 
\begin{align*}
&\Delta t \sum_{i=1}^n \sum_{K=1}^N \left(J_{i,K-\frac{1}{2}}^{p,\star} - J_{i,K+\frac{1}{2}}^{p,\star} \right) \left( \log(c_{i,K}^{p,\star}) + \mu_i^{*,\alpha_K^{p-1}} \right) \\
&= -\frac{\Delta t}{\Delta x} \sum_{K \neq K^{p-1}} (D_{K+\frac{1}{2}} \log(\bbc^{p,\star}))^T \bbM_{\alpha_K^{p-1}}(c_{K+\frac{1}{2}}^{p,\star}) D_{K+\frac{1}{2}} \log(\bbc^{p,\star})\\
&-\Delta t \sum_{i=1}^n F_i^{p,\star} D_{K^{p-1}+\frac{1}{2}} \l[ \log(\bbc_i^{p,\star}) + \mu_i^{*} \r]. 
\end{align*}
On the other hand, the convexity of the functional $c \to c \log c $ implies that 
\[ 
\begin{aligned}
&\sum_{i=1}^n \sum_{K=1}^N \left( \Delta_K^{p,\star} c_{i,K}^{p,\star} - \Delta_K^{p-1} c_{i,K}^{p-1} \right) \left( \log(c_{i,K}^{p,\star}) + \mu_i^{*,\alpha_K^{p-1}} \right) \\
&\geq \sum_{K=1}^N \sum_{i=1}^n \left( \Delta_K^{p,\star} c_{i,K}^{p,\star} \left( \log(c_{i,K}^{p,\star}) + \mu_i^{*,\alpha_K^{p-1}} \right) - \Delta_K^{p-1} c_{i,K}^{p-1} \left( \log(c_{i,K}^{p-1}) + \mu_i^{*,\alpha_K^{p-1}} \right) \right) \\
&= \cH^{p-1}(\bbc^{p,\star},X^p) - \cH^{p-1}(\bbc^{p-1},X^{p-1}), 
\end{aligned}\]
so inserting the two previous equations in \eqref{eq1} gives
\begin{equation}
    \label{eq2}
    \begin{aligned}
&\cH^{p-1}(\bbc^{p,\star},X^p) +\frac{\Delta t}{\Delta x} \sum_{K \neq K^{p-1}} (D_{K+\frac{1}{2}} \log(\bbc^{p,\star}))^T \bbM_{\alpha_K^{p-1}}(c_{K+\frac{1}{2}}^{p,\star}) D_{K+\frac{1}{2}} \log(\bbc^{p,\star}) \\
& +\Delta t \sum_{i=1}^n F_i^{p,\star} D_{K^{p-1}+\frac{1}{2}} \l[ \log(\bbc_i^{p,\star}) + \mu_i^{*} \r] \leq \cH^{p-1}(\bbc^{p-1},X^{p-1}) 
\end{aligned}
\end{equation}
It remains to prove the inequality
\[ \cH^p(\bbc^{p},X^p) \leq \cH^{p-1}(\bbc^{p,\star},X^p), \]
or equivalently
\[ \cH \l(\cI_{\Delta_K^p}\l(\bbc^{p}\r),X^p \r) \leq \cH \l(\cI_{\Delta_K^{p,\star}}\l(\bbc^{p,\star}\r),X^p \r). \]
The latter stems from the convexity of $\cH$ with respect to its first argument and the fact that $\cI_{\Delta_K^p}\l(\bbc^{p}\r)$ is obtained from $\cI_{\Delta_K^{p,\star}}\l(\bbc^{p,\star}\r)$ by projection \eqref{eq:projection} and convex combination \eqref{eq:average}. The proof is complete.
\end{proof}

\subsection{Existence of a discrete solution}

Thanks to the \emph{a priori} estimates that were established in the previous sections, we are now in position to prove the existence of at least one discrete solution to the scheme (S) that satisfies all the required properties. 
\begin{theorem}
\label{thm:existence}
Let $(\bbc^{p-1},X^{p-1})$ be such that $\bbc^{p-1} \geq 0$ and $\sum_{i=1}^n c_{i,K}^{p-1} = 1$ for any $K \in \{1,\dots,N\}$, $X^{p-1} \in [0,1]$. Assume in addition that $\sum_{K=1}^N \Delta_K^{p-1} c_{i,K}^{p-1} > 0$. There exists a solution $(\bbc^p,X^p)$ to $(S)$ that satisfies the properties listed in Lemmas~\ref{lem:structure}, ~\ref{lem:strict-pos} and \ref{lem:dissip}.
\end{theorem}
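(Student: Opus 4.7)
The plan is to prove existence of a solution to the modified scheme $(\widetilde{S})$ by a topological-degree argument, and then to invoke Lemmas~\ref{lem:structure}, \ref{lem:strict-pos} and \ref{lem:dissip} to conclude, since the first of these lemmas shows that any solution to $(\widetilde{S})$ is automatically a solution to the original scheme $(S)$ with the announced properties.

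First I would recast $(\widetilde{S})$ as a continuous equation $\Psi(\bbc) = 0$ on an open set $\mathcal{U} \subset \mathbb{R}^{nN}$ containing $\mathcal{A}^N$. The interface position $\widetilde{X}^p$ and the cell sizes $\widetilde{\Delta}_K^p$ are continuous functions of $\bbc$ through \eqref{eq:interface-normalized}--\eqref{eq:size-cell-norm}, and the CFL-like assumption \eqref{eq:dtass} combined with the uniform bound on $\widetilde{\bbF}^p$ (inherited from the truncation $\cdot^{\diamond}$) guarantees that $\widetilde{\Delta}_K^p > 0$. The solid fluxes \eqref{eq:bulk-flux-discr-s} are polynomial in $\bbc$; the interface fluxes \eqref{eq:numerical-BV-trunc} are continuous and bounded thanks to $\cdot^{\diamond}$; and the gaseous fluxes defined implicitly by \eqref{eq:bulk-flux-discr-g-implicit} are continuous on the open set where each matrix $\check{\widetilde{\bbA}}_g(\bbc_{K+\frac{1}{2}})$ is invertible, which by Lemma~\ref{lem:inversion} and continuity of the spectrum contains an open neighborhood $\mathcal{U}$ of $\mathcal{A}^N$.

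I would then construct a homotopy $H:[0,1]\times \mathcal{U}\to \mathbb{R}^{nN}$ with $H(1,\cdot)=\Psi$ and $H(0,\cdot)$ an affine map having a single isolated zero in the interior of $\mathcal{A}^N$. A convenient choice is to multiply simultaneously every flux term by $\lambda$ and to replace the initial data $c_{i,K}^{p-1}$ in \eqref{eq:conservation-discr} and \eqref{eq:interface-normalized} by the convex combination $(1-\lambda)/n + \lambda\, c_{i,K}^{p-1}$, so that at $\lambda=0$ the interface does not move and the scheme reduces to $c_{i,K}^{p,\star,0} = 1/n$, which gives a Brouwer degree of $\pm 1$. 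The arguments of Lemmas~\ref{lem:structure} and \ref{lem:strict-pos} then adapt verbatim to any zero $\bbc^{p,\star,\lambda}$ of $H(\lambda,\cdot)$: summation over species cancels the scaled interface fluxes and yields a backward discrete heat equation with null data for $\sum_i c_{i,K}^{p,\star,\lambda}-1$, hence volume-filling; a minimum-principle argument on the scaled conservation laws yields nonnegativity; and mass conservation combined with the sign structure of the fluxes yields strict positivity, using that the species masses $\sum_K \Delta_K^{p-1}\bigl[(1-\lambda)/n + \lambda\, c_{i,K}^{p-1}\bigr]$ remain strictly positive for every $\lambda \in [0,1]$.

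The main obstacle is to upgrade this pointwise strict positivity into a $\lambda$-uniform lower bound $c_{i,K}^{p,\star,\lambda} \geq \varepsilon_0 > 0$, so that all zeros of $H(\lambda,\cdot)$ stay in a single bounded open set $\mathcal{O} \subset \mathcal{U}$ at positive distance from $\{c_{i,K}=0\}$, as required to invoke the Brouwer degree. This is resolved by a compactness argument: if sequences $\lambda_k \in [0,1]$ and $\bbc^k$ with $H(\lambda_k, \bbc^k) = 0$ were to satisfy $\min_{i,K} c_{i,K}^k \to 0$, extracting a convergent subsequence $\lambda_k \to \lambda^*$, $\bbc^k \to \bbc^* \in \mathcal{A}^N$ (the latter being compact) and passing to the limit by continuity of $H$ would produce a zero of $H(\lambda^*,\cdot)$ with a vanishing component, contradicting the strict-positivity argument of Lemma~\ref{lem:strict-pos} applied at $\lambda^*$. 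Homotopy invariance of the Brouwer degree on $\mathcal{O}$ then yields $\deg(\Psi, \mathcal{O}, 0) = \pm 1 \neq 0$, hence a zero of $\Psi$ in $\mathcal{O}$. By Lemmas~\ref{lem:structure}, \ref{lem:strict-pos} and \ref{lem:dissip}, this zero, followed by the post-processing of Section~\ref{sec:pp}, provides a solution to $(S)$ that is strictly positive and satisfies the discrete free energy dissipation inequality.
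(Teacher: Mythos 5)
Your proposal is correct and follows essentially the same route as the paper: a topological-degree homotopy deforming the coupled nonlinear scheme $(\widetilde{S})$ into a linear problem with a unique solution, with the \emph{a priori} estimates of Lemmas~\ref{lem:structure} and \ref{lem:strict-pos} confining all zeros along the path, and the conclusion then delegated to those lemmas. The one substantive (though harmless) difference is that your detour through a $\lambda$-uniform strict-positivity lower bound is unnecessary: the paper simply takes the open set to be the $\eta$-neighbourhood $\cA_\eta^N\times I_\eta$ of $\cA^N\times[0,1]$, on which the residual is already continuous (the logarithmic-mean edge concentrations extend by zero and $\check{\widetilde{\bbA}}_g$ remains invertible near $\cA$ by Lemma~\ref{lem:inversion}), so nonnegativity and volume-filling alone keep every zero at distance $\eta$ from the boundary and no positive distance from $\{c_{i,K}=0\}$ is needed for the degree to be well defined and homotopy-invariant.
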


\begin{proof}
The proof uses the topological degree theory and in particular the properties of the degree listed in \cite[Theorem 3.1]{deimling1985}. The idea is to continuously deform our coupled system to two independent \emph{linear} systems for which we know that a solution exists, while ensuring that some \emph{a priori} estimates remain valid along the path. In fact, only the nonnegativity and volume-filling estimates are needed, since they provide boundedness in $l^\infty$ norm. We detail the argument below.

Let $\lambda\in [0,1]$. The scheme $(\widetilde{S}_\lambda)$ is constructed by introducing the following modifications to $(\widetilde{S})$. First, we introduce for any $\bbu \in \R^n$ the matrices
 \begin{equation}
     \label{eq:A-lambda}
     \begin{aligned}
         \widehat{\bbA}_s^\lambda(\bbu) &:= \kappa^{*,s} \bbI + \lambda \bar{\bbA_s}(\bbu) , \\
         \check{\widetilde{\bbA}}_g^\lambda(\bbu) &:= \kappa^{*,g} \bbI + \lambda \bar{\bbA_g}(\bbu).
     \end{aligned}
\end{equation}
Note that, adapting the proof of Lemma~\ref{lem:inversion}, $\check{\widetilde{\bbA}}_g^\lambda(\bbu)$ has positive eigenvalues uniformly bounded away from 0 and is therefore in particular invertible as soon as $\bbu \in \cA$. Then the bulk fluxes are defined as in \eqref{eq:bulk-flux-discr-s}-\eqref{eq:bulk-flux-discr-g-implicit}, using $\widehat{\bbA}_s^\lambda(\bbu)$ (resp. $\check{\widetilde{\bbA}}_g^\lambda(\bbu)$) instead of $\widehat{\bbA}_s(\bbu)$ (resp. $\check{\widetilde{\bbA}}_g(\bbu)$).  

Second, the interface fluxes \eqref{eq:interfaceflux-mod} are modified into, for any $\bbc \in (\R^n)^N$,
\begin{equation}
    \label{eq:interfaceflux-mod-lambda}
    \begin{aligned}
        \bbJ_{K^{p-1}+\frac{1}{2}}^{p, s, \lambda}(\bbc) &= - \lambda \l(\sum_{i=1}^n c_{i,K^{p-1}} \r) \widetilde{\bbF}^p(\bbc), \\
        \bbJ_{K^{p-1}+\frac{1}{2}}^{p, g,\lambda}(\bbc) &= - \lambda \l(\sum_{i=1}^n c_{i,K^{p-1}+1} \r) \widetilde{\bbF}^p(\bbc).
    \end{aligned}
\end{equation}
Finally, the interface evolves according to
\begin{equation}
\label{eq:interface-discrete-lambda}
\widetilde{X}^{p,\lambda} = X^{p-1} + \lambda \Delta t \sum_{i=1}^n \widetilde{F}_i^{p,\star}.
\end{equation}
A solution to this scheme is denoted by $(\bbc^\lambda, \widetilde{X}^{p,\lambda})$. For any $\bbc\in (\R^n)^N$, $\lambda \in [0,1]$ and $X\in [0,1]$, we denote by $h(\lambda,\bbc, X) \in (\R^n)^N \times \R$ the associated residual vector. For $\lambda=0$, the scheme boils down to two independent linear diagonal systems defined in a fixed boundary domain with zero-flux boundary conditions, for which we know that a solution exists in $\cA^N$. For $\lambda=1$, we get the scheme $(\widetilde{S})$, for which we have already proven nonnegativity and volume-filling constraint in Lemma~\ref{lem:structure}. The proof can be directly adapted to the case $\lambda \in (0,1)$, so that any solution $\bbc^\lambda$ to the scheme $(\widetilde{S}_\lambda)$ belongs to $\cA^N$. Let us now define, for $\eta > 0$, the open sets
\[ \cA_\eta^N := \left\{ \bbu \in (\R^n)^N, \inf_{\bbv \in \cA^N} \|\bbu - \bbv \|_{l^\infty} \leq \eta. \right\}  \quad \mbox{ and } \quad I_\eta = (-\eta, 1+ \eta)\]
For $\eta > 0$ sufficiently small, arguing again by continuity of the spectrum and using the uniform lower bound on the eigenvalues as in the proof of Lemma~\ref{lem:inversion}, the residual $[0,1] \times \cA_\eta^N \times I_\eta \ni (\lambda,\bbc, X) \to h(\lambda,\bbc, X)$ is a well-defined, continuous function. Moreover, the estimates give that any solution $(\bbc^\lambda, \widetilde{X}^{p,\lambda})$ to $h(\lambda,\bbc^\lambda,\widetilde{X}^{p,\lambda})=0$ lies in $\cA^N \times [0,1]$  and therefore in the interior of $\cA_\eta^N \times I_\eta$. These two ingredients allow to conclude that the topological degree of $\l(h(\lambda, \cdot),\cA_\eta^N\times I_\eta,0\r)$ is constant with respect to $\lambda$ and therefore equal to 1 when $\lambda=1$, which yields existence of a solution to $(\widetilde{S})$. This solution then satisfies all the previously established \emph{a priori} estimates and is in consequence a solution to $(S)$.

\end{proof}

\section{Numerical results}
\label{sec:numerics}

The numerical scheme has been implemented in the Julia language. The nonlinear system is solved with Newton's method, with stopping criterion $\|\bbc^{p,k+1}-\bbc^{p,k} \|_{\infty} \leq 10^{-12}$ and adaptive time stepping based on the CFL condition \eqref{eq:CFL}. We fix an initial interface $X^0=0.51$ and consider smooth initial concentrations \[c^0_{1}(x) = c^0_{2}(x) = \frac{1}{4} \l(1 + \cos(\pi x)\r), ~  c_{3}^0(x) = \frac{1}{2}\l(1-\cos(\pi x)\r)\] that are suitably discretized on a uniform mesh of $N=100$ cells. The cross-diffusion coefficients are taken equal in each phase, with values $\kappa_{12}=\kappa_{21}= 0.2, ~ \kappa_{23}=\kappa_{32}=0.1, ~ \kappa_{13}=\kappa_{31}=1$ and the numerical diffusion parameters are  $\kappa^{*,s} = \kappa^{*,g} = 0.1$ (but remember that the cross-diffusion matrices of each phase are morally inverses of each other). The solid reference chemical potential is chosen such that $\bbmu^{*,s} =0$, so that the interface dynamics only depends on $\boldsymbol{\beta^*} = \exp(\bbmu^{*,g})$. We always consider the time horizon $T=5$.

\medskip

Our first test case is devoted to the trivial situation $\boldsymbol{\beta^*} \equiv 1$. We start from a time step $\Delta t_1 = 8 \times 10^{-4}$. Snapshots of the simulation are presented in Figure~\ref{fig:trivial}, where we verify that, although discontinuities appear across the interface, the interface itself does not move, and the system converges to constant concentrations in the entire domain. Exponential decay of the relative free energy $\cH(\bbc^p,X^p)-\cH(\bbc^{\infty},X^{\infty})$ is shown in Figure~\ref{fig:trivial-longtime}. Note that, in this case, the phases are only distinguished by different speeds of convergence to equilibrium.

\begin{figure*}
  \centering
  \begin{subfigure}[b]{0.35\textwidth}
      \centering
      \includegraphics[width=\textwidth]{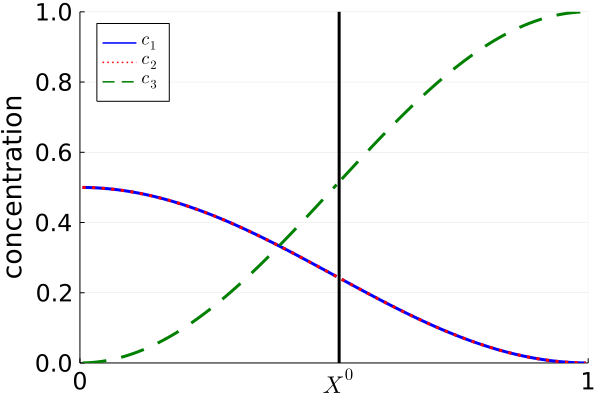}
      \caption[Network2]%
      {{\small Initial profiles}}    
  \end{subfigure}
  \quad
  \begin{subfigure}[b]{0.35\textwidth}  
      \centering 
      \includegraphics[width=\textwidth]{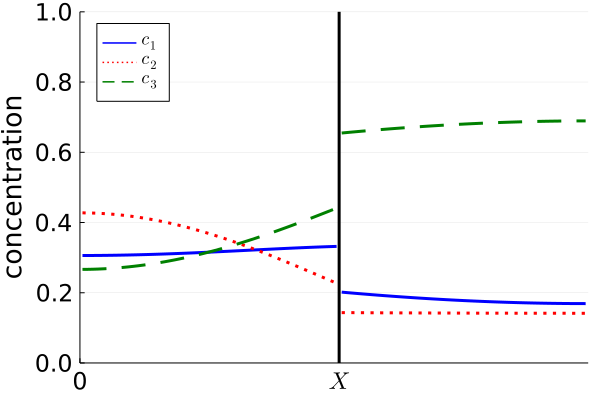}
      \caption[]%
      {{\small $t=0.25$}}    
  \end{subfigure}

  \begin{subfigure}[b]{0.35\textwidth}   
      \centering 
      \includegraphics[width=\textwidth]{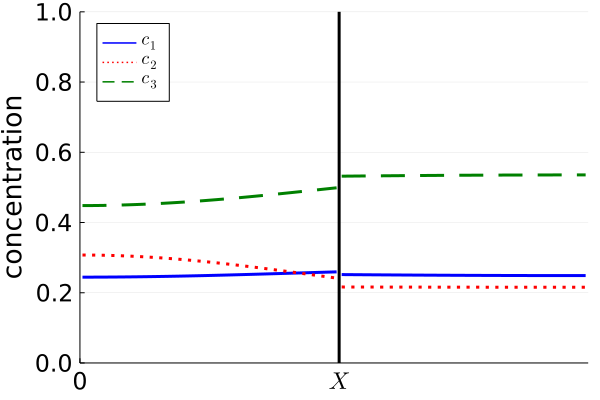}
      \caption[]
      {{\small $t=1.0$}}    
  \end{subfigure}
  \quad
  \begin{subfigure}[b]{0.35\textwidth}   
      \centering 
      \includegraphics[width=\textwidth]{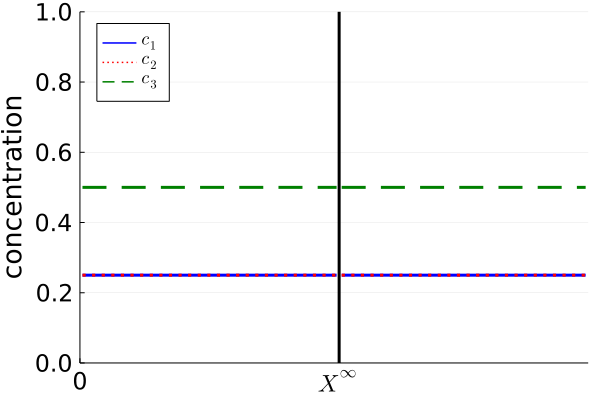}
      \caption[]%
      {{\small Stationary profiles}}  
  \end{subfigure}
  \caption[]
  {\small Trivial case: no interface movement} 
  \label{fig:trivial}
\end{figure*}

\medskip

In our second test case, we choose $\bbbeta^* = [\frac{1}{6},4,4]$, so as to fulfill the equilibrium condition \eqref{eq:two-phase-condition}, and an initial time step $\Delta t_2 = 6 \times 10^{-4}$. The simulation is presented in Figure~\ref{fig:eq-monotone}, where we observe the interface evolution and convergence in the long-time limit to the two-phase stationary solution defined by Proposition~\ref{prop:stationary}. To study the long-time asymptotics, we first compute accurately the stationary solution $(\bbc^{\infty},X^{\infty})$ (we construct the function $\varphi$ defined in \eqref{eq:varphi} and solve $\varphi(X^\infty)=0$ with Newton's method). Then, in addition to the relative free energy, we study the relative interface $|X^{\infty}-X^p|$ over time, see Figure~\ref{fig:eq-monotone-longtime}, where we observe exponential convergence and decrease of both functionals. In particular, our scheme is well-balanced and preserves the asymptotics of the continuous system. 

\begin{figure*}
  \centering
  \begin{subfigure}[b]{0.35\textwidth}
      \centering
      \includegraphics[width=\textwidth]{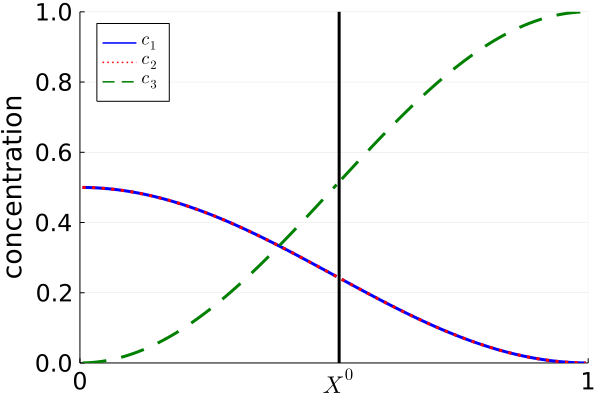}
      \caption[Network2]%
      {{\small Initial profiles}}    
  \end{subfigure}
  \quad
  \begin{subfigure}[b]{0.35\textwidth}  
      \centering 
      \includegraphics[width=\textwidth]{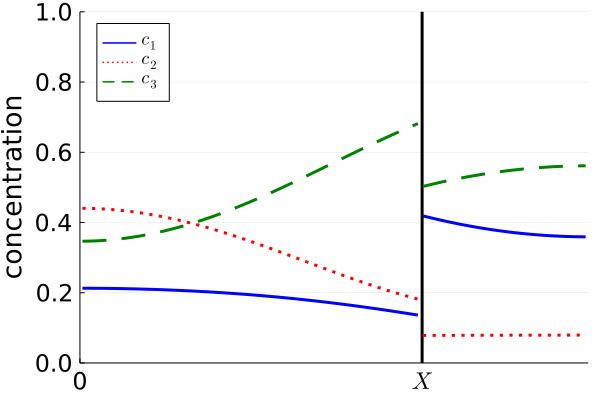}
      \caption[]%
      {{\small $t=0.25$}}    
  \end{subfigure}
  
  \begin{subfigure}[b]{0.35\textwidth}   
      \centering 
      \includegraphics[width=\textwidth]{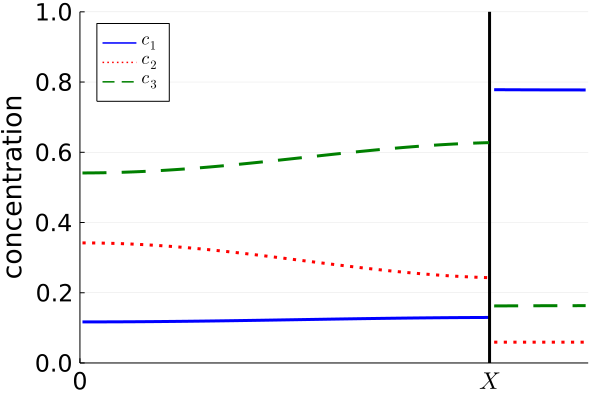}
      \caption[]
      {{\small $t=1.0$}}    
  \end{subfigure}
  \quad
  \begin{subfigure}[b]{0.35\textwidth}   
      \centering 
      \includegraphics[width=\textwidth]{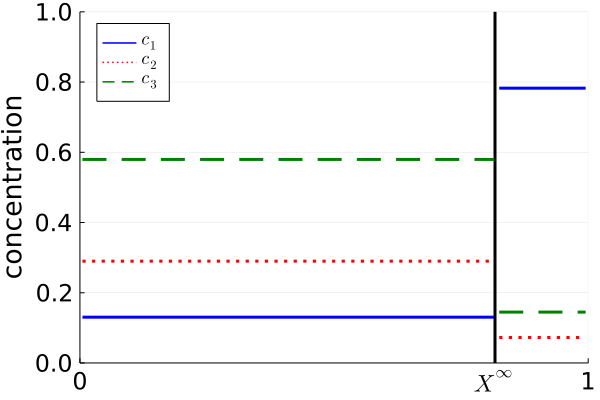}
      \caption[]%
      {{\small Stationary profiles}}  
  \end{subfigure}
  \caption[]
  {\small Equilibrium case with monotone interface } 
  \label{fig:eq-monotone}
\end{figure*}

Note that, in the previous case, the interface evolves monotonously and $|X^{\infty}-X^p| = X^\infty-X^p$. However, modifying $\bbbeta^*$, we can easily construct a test case where the interface is not monotone along the evolution, see Figures~\ref{fig:eq-non-monotone} and \ref{fig:eq-non-monotone-longtime}.
\begin{figure*}
  \centering
  \begin{subfigure}[b]{0.35\textwidth}
      \centering
      \includegraphics[width=\textwidth]{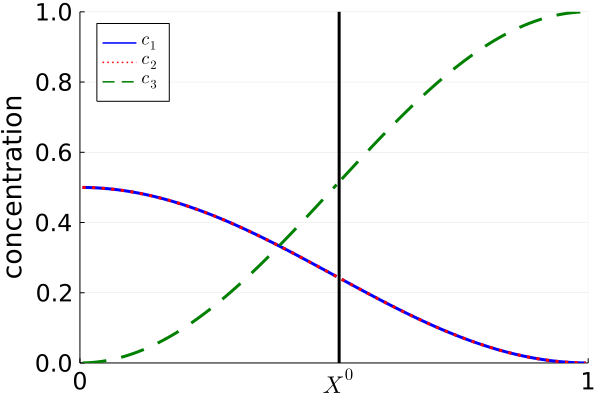}
      \caption[Network2]%
      {{\small Initial profiles}}    
  \end{subfigure}
  \quad
  \begin{subfigure}[b]{0.35\textwidth}  
      \centering 
      \includegraphics[width=\textwidth]{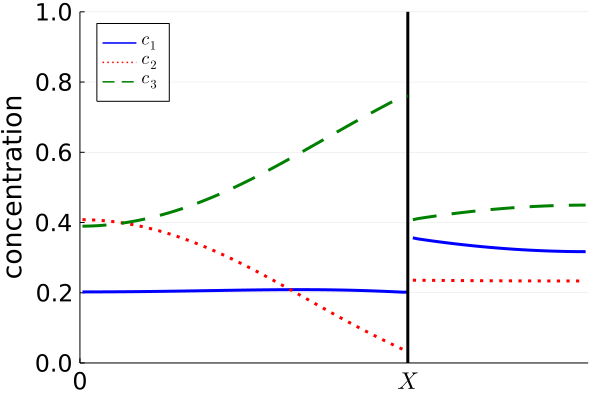}
      \caption[]%
      {{\small $t=0.25$}}    
  \end{subfigure}
  
  \begin{subfigure}[b]{0.35\textwidth}   
      \centering 
      \includegraphics[width=\textwidth]{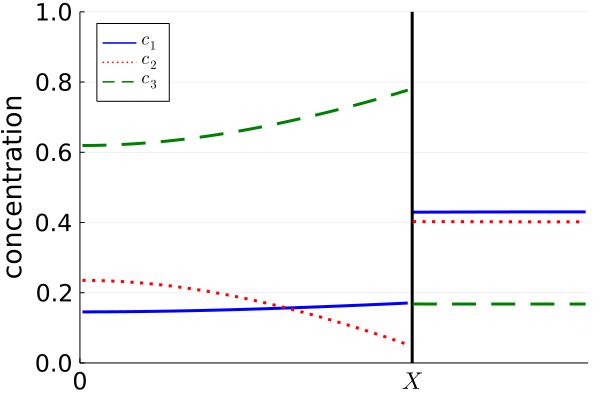}
      \caption[]
      {{\small $t=1.0$}}    
  \end{subfigure}
  \quad
  \begin{subfigure}[b]{0.35\textwidth}   
      \centering 
      \includegraphics[width=\textwidth]{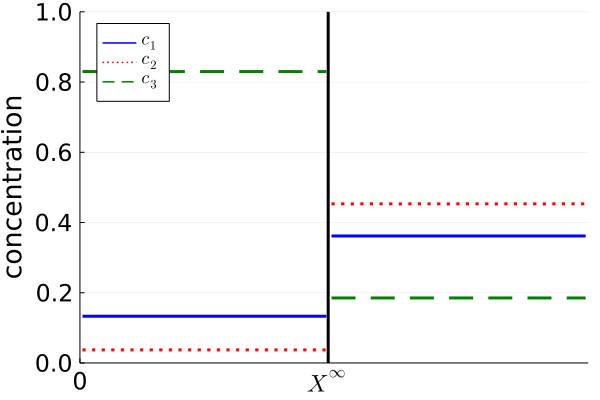}
      \caption[]%
      {{\small Stationary profiles}}  
  \end{subfigure}
  \caption[]
  {\small Equilibrium case with non-monotone interface } 
  \label{fig:eq-non-monotone}
\end{figure*}
Finally, we verify that, as soon as $\bbbeta^* \not\equiv 1$ violates \eqref{eq:two-phase-condition}, then the system converges in finite time to a one-phase solution, see Figures~\ref{fig:non-eq} and \ref{fig:non-eq-longtime}.

\medskip 

\begin{figure*}
  \centering
  \begin{subfigure}[b]{0.35\textwidth}
      \centering
      \includegraphics[width=\textwidth]{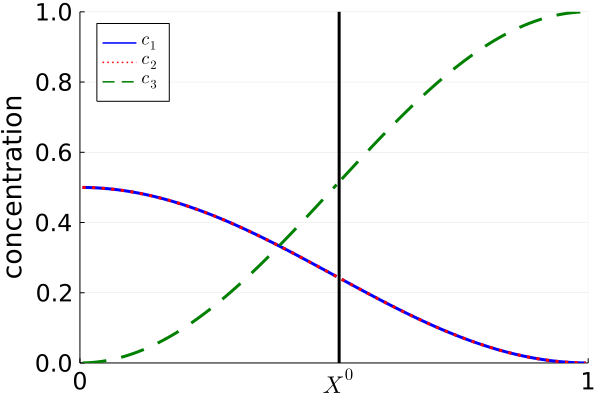}
      \caption[Network2]%
      {{\small Initial profiles}}    
  \end{subfigure}
  \quad
  \begin{subfigure}[b]{0.35\textwidth}  
      \centering 
      \includegraphics[width=\textwidth]{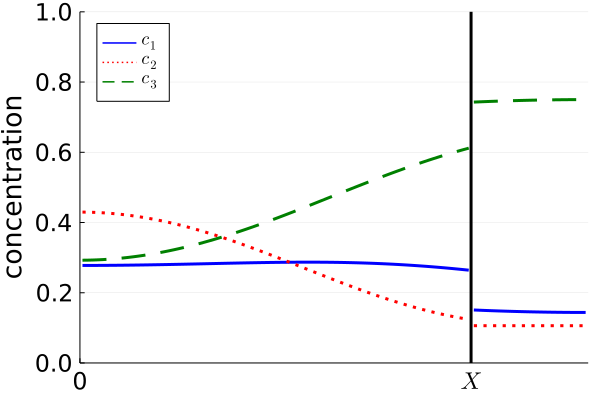}
      \caption[]%
      {{\small $t=0.25$}}    
  \end{subfigure}
  
  \begin{subfigure}[b]{0.35\textwidth}   
      \centering 
      \includegraphics[width=\textwidth]{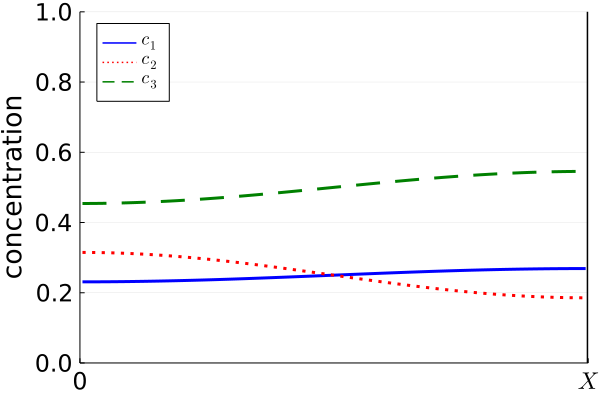}
      \caption[]
      {{\small $t=1.0$}}    
  \end{subfigure}
  \quad
  \begin{subfigure}[b]{0.35\textwidth}   
      \centering 
      \includegraphics[width=\textwidth]{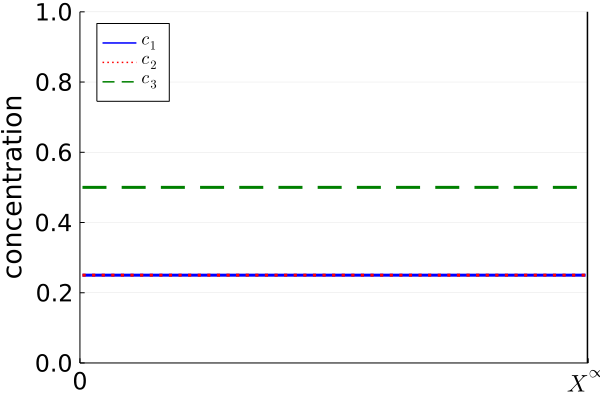}
      \caption[]%
      {{\small Stationary profiles}}  
  \end{subfigure}
  \caption[]
  {\small Non-equilibrium case } 
  \label{fig:non-eq}
\end{figure*}

\begin{figure*}
  \centering
  \begin{subfigure}[b]{0.35\textwidth}
    \centering
    \includegraphics[width=\textwidth]{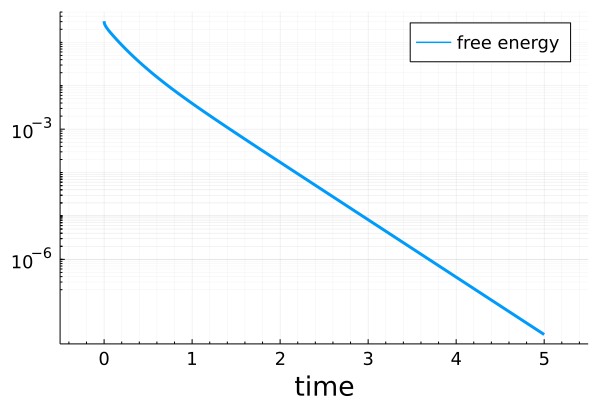}
\caption{\small Trivial case}
\label{fig:trivial-longtime}
\end{subfigure}
\quad
   \begin{subfigure}[b]{0.35\textwidth}
    \centering
    \includegraphics[width=\textwidth]{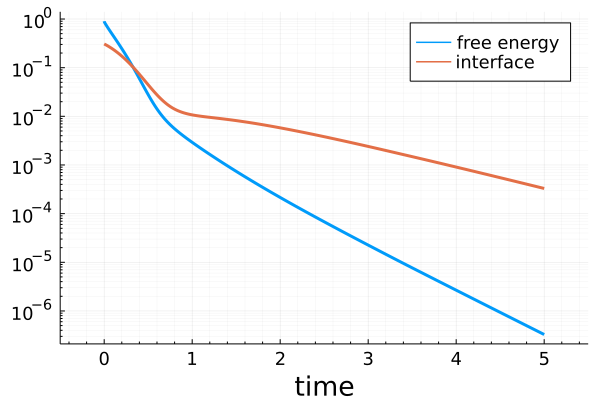}
\caption{\small Equilibrium case: monotone}
\label{fig:eq-monotone-longtime}
\end{subfigure}

  \begin{subfigure}[b]{0.35\textwidth}
    \centering
    \includegraphics[width=\textwidth]{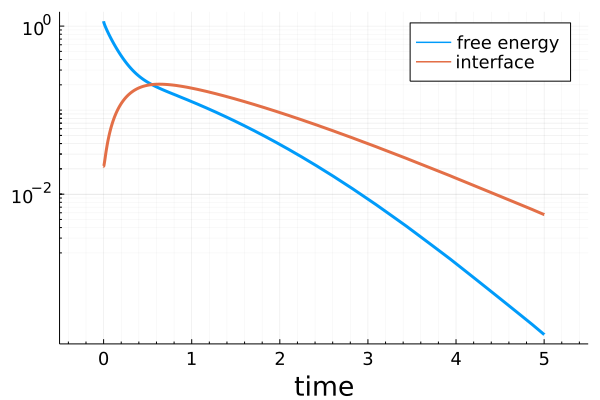}
\caption{\small Equilibrium case: not monotone}
\label{fig:eq-non-monotone-longtime}
\end{subfigure}
  \begin{subfigure}[b]{0.35\textwidth}
    \centering
    \includegraphics[width=\textwidth]{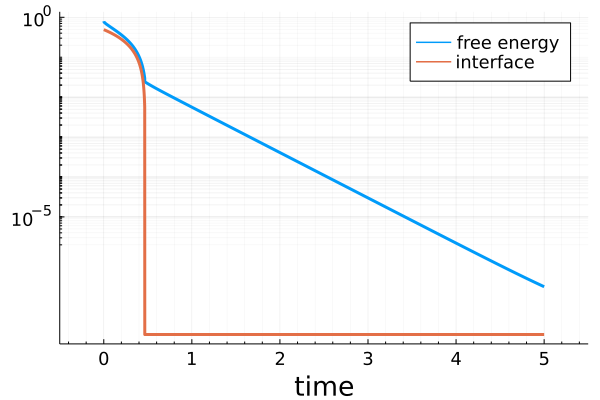}
\caption{\small Non-equilibrium case}
\label{fig:non-eq-longtime}
\end{subfigure}
  \caption{\small $\cH(\bbc^p,X^p)-\cH(\bbc^{\infty},X^{\infty})$ and $|X^{\infty}-X^p|$ for different test cases }
\end{figure*}

Our final test case is devoted to a convergence analysis with respect to the size of the mesh. We consider a fixed time step $\Delta t_2=10^{-4}$, a final time $T_2=0.25$, uniform meshes from $2^3$ to $2^{10}$ cells and we compare the different solutions with respect to a reference solution computed on a finer grid of $2^{11}$ cells. The comparison is done by projecting the certified solution onto the coarse grid using the mean value of the certified solution in the coarse cell. Then the discrete $L_{t,x}^1$ error is calculated on the coarse grid, see Figure~\ref{fig:convergence} where we also display the $L_t^1$ error on the interface. One clearly observes convergence, at first order in space for the concentrations. This indicates that the interface treatment induces a loss of order with respect to the case of a fixed interface, where the scheme is second-order accurate \cite{cancesConvergentEntropyDiminishing2020,cancesFiniteVolumesStefan2024}. 

\begin{figure*}
  \centering
      \includegraphics[width=0.5\textwidth]{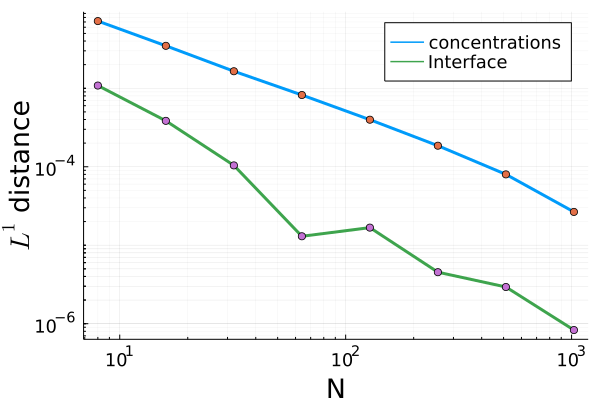}
  \caption{Convergence analysis of the solution under space grid refinement}
  \label{fig:convergence}
\end{figure*}

\section{Perspectives}
A natural perspective is to prove the convergence of the finite volume scheme presented here to some weak solution of the model~\eqref{eq:model}, which would yield in particular the existence of such a solution to the model. The study of the long-time asymptotic behaviour of such weak solutions and their discrete counterparts is also on the scientific agenda. In particular, proving the conjecture inspired by the numerical results shown in Section~\ref{sec:numerics} that the solution converges exponentially fast with respect to time to some stationary state of the model in the sense of Definition~\ref{def:stationary} is an interesting question. We intend to study these issues in a future work.

\section*{Acknowledgement}

Jean Cauvin-Vila acknowledges support from the ANR project COMODO (ANR-
19-CE46-0002) and from the European Research Council (ERC) under the European Union’s Horizon 2020 research and innovation programme, ERC Advanced Grant NEUROMORPH, no. 101018153.

\FloatBarrier
\printbibliography

\end{document}